\newtheorem{thm}{Theorem}[section]
\newtheorem{lem}[thm]{Lemma}
\newtheorem{defn}[thm]{Definition}
\newtheorem{prop}[thm]{Proposition}
\newtheorem{exam}[thm]{Example}
\theoremstyle{remark}
\newtheorem{rem}{Remark}
\newcommand{\N}{\mathbb{N}}
\newcommand{\R}{\mathbb{R}}
\newcommand{\C}{\mathbb{C}}
\newcommand{\K}{\mathbb{K}}
\newcommand{\calM}{\mathcal{M}}
\newcommand{\calS}{\mathcal{S}}
\newcommand{\setsep}{\;;\;}
\DeclareMathOperator{\trace}{trace} 
\DeclareMathOperator{\rank}{rank}
\newcommand{\norm}[1]{\left\lVert#1\right\rVert} 
\newcommand{\abs}[1]{\left\lvert#1\right\rvert} 
\newcommand{\gl}{\mathbf{GL}}
\newcommand{\vect}[1]{\begin{pmatrix}#1\end{pmatrix}}
\newcommand{\nnorm}[1]
{{\left\vert\kern-0.25ex\left\vert\kern-0.25ex\left\vert #1 
    \right\vert\kern-0.25ex\right\vert\kern-0.25ex\right\vert}}
\author[1]{Jeremias Epperlein\thanks{\href{mailto:jeremias.epperlein@uni-passau.de}{jeremias.epperlein@uni-passau.de}}}
\author[1]{Fabian Wirth\thanks{\href{mailto:fabian.lastname@uni-passau.de}{fabian.lastname@uni-passau.de}}}
\affil[1]{University of Passau, %
    Faculty of Computer Science and Mathematics, %
    Innstraße 33, %
    94032 Passau, Germany}
\title{Auerbach bases, projection constants, and the joint spectral radius of
principal submatrices}
\begin{document}
\maketitle

\begin{abstract}
It is shown that compact sets of complex matrices can always be brought, via similarity transformation, into a form where all matrix entries are bounded in absolute value by the joint spectral radius (JSR). The key tool for this is that every extremal norm of a matrix set admits an Auerbach basis; any such basis gives rise to a desired coordinate system.
An immediate implication is that all diagonal entries — equivalently, all one-dimensional principal submatrices — are uniformly bounded above by the JSR.
It is shown that the corresponding bounding property does not hold for higher dimensional principal submatrices. 
More precisely, we construct finite matrix sets for which, across the entire similarity orbit, the JSRs of all higher-dimensional principal submatrices exceed that of the original set. This shows that the bounding result does not extend to submatrices of dimension greater than one.
The constructions rely on tools from the geometry of finite-dimensional Banach spaces, with projection constants of norms playing a key role.
Additional bounds of the JSR of principal submatrices are obtained using John's ellipsoidal approximation and known estimates for projection constants.
\end{abstract}



\section{Introduction}

The joint spectral radius (JSR) of a compact set of matrices describes the maximal exponential growth rate of matrix products formed from the set.
It was introduced by Rota and Strang in \cite{RotaStra60}, and a comprehensive and accessible introduction can be found in the monograph by Jungers \cite{Jungers}.
The JSR has numerous applications — e.g. in wavelet theory \cite{daubechies1992sets, lagariasFinitenessConjectureGeneralized1995}, and in dynamical systems, where it characterizes the exponential stability of discrete-time linear switched systems with arbitrary switching \cite{shorten2007stability}. Further applications are discussed in \cite{Jungers}.

Recently, there has been a renewed interest in the question of simultaneous similarity and studying similarity orbits of matrix sets in order to understand properties of the joint spectral radius. This has been carried out with great success for pairs of $2\times 2$ matrices in \cite{bochi2024spectrum,laskawiec2024partial}, where a complete set of invariants developed by Friedland in \cite{FRIEDLAND1983189} has been applied. 
In particular, \cite{bochi2024spectrum, laskawiec2024partial} examine spectrum maximizing products (SMPs). These are finite products of matrices that realize the JSR via their spectral radius. It is known that SMPs do not always exist, even for finite sets of matrices. After the seminal paper \cite{lagariasFinitenessConjectureGeneralized1995} examples of finite matrix sets without SMPs have been shown to exist in increasingly concrete form in \cite{blondel2003elementary,hareExplicitCounterexampleLagarias2011}. Despite this, many successful algorithms for computing the JSR rely on the assumption that SMPs exist and are even unique \cite{guglielmi2008algorithm, guglielmi2013exact}. It is a result of \cite{bochi2024spectrum} that there are open sets of matrix sets for which SMPs exist but are not unique.

These developments highlight the importance of understanding how structure and representation affect the behavior of the JSR — a theme that connects naturally to the broader normal form problem for compact matrix sets. Here one seeks canonical representatives of similarity orbits. Even for pairs of matrices this problem is notoriously hard.
Gelfand and Ponomarev demonstrated in \cite{gel1969remarks} that classifying pairs of commuting matrices under simultaneous similarity is as hard as the broader task of classifying $n$-tuples of matrices, which may not commute, under simultaneous similarity.
In representation theory, this problem is even used to characterize \enquote{wildness}, see \cite{BELITSKII2003203}.
More specifically, a classification problem is considered to be \enquote{wild} if its solution can be used to classify matrix tuples up to simultaneous conjugacy. For positive results on this classification problem, see \cite{FRIEDLAND1983189} for an algebraic geometric solution,
and \cite{chistovPolynomialTimeAlgorithms1997} for a solution in the sense of decidability.
For an algorithmic solution of the harder normal form problem, see
\cite{belitskiiNormalFormsMatrix2000}
and
\cite{sergeichukCanonicalMatricesLinear2000}.

In this paper, we study a simpler but more general question: given a compact set of square matrices, does its similarity orbit contain an instance in which all entries of all matrices are bounded in absolute value by the joint spectral radius? Since the JSR is invariant under simultaneous similarity, it is natural to ask whether this invariance is reflected by such a concrete coordinate-wise bound. We answer this question affirmatively for all matrix sets with positive JSR. 

Our first result, proved in Section~\ref{sec:auerbach}, establishes that if the JSR is strictly positive, then the similarity orbit of the matrix set contains an instance in which all entries of all matrices are bounded above in absolute value by the JSR. The idea of proof is to show that it is always possible to transform the matrix set in such a way that there is an extremal norm sandwiched between the $1$- and the $\infty$-norm.
The proof is based on the existence of an extremal norm whose associated Auerbach basis defines the desired coordinate system. These bases, classical objects in Banach space theory, consist of biorthogonal sequences with unit norm in both the norm and its dual. They were first shown to exist by Auerbach \cite{auerbach1930area} with independent derivations by Day, respectively Taylor \cite{day1947polygons, taylor1947geometric}. Recent work by Weber and Ziemke \cite{weber2017pelczynski} shows that for $d>2$, there exist at least $(d-1)d/2+1$ such bases (up to natural equivalences). It was already noticed in the original paper by \cite{RotaStra60} that norms are instrumental in the analysis of the joint spectral radius, and this has been reconfirmed, e.g. in \cite{Elsn95, Bara88, Wirt02, morris2010criteria, bochi2024spectrum}. In general, many of the numerical procedures available for the computation of the joint spectral radius can be interpreted in terms of extremal norms, see e.g. \cite{guglielmi2008algorithm, guglielmi2013exact}. 

A natural question is whether a similar result holds for principal submatrices, i.e. if, given a particular submatrix pattern, it is possible to find an element of the similarity orbit such that the JSR of the induced set of submatrices is bounded by the JSR of the original matrix set. This question was motivated by our recent work on regularity properties of the JSR. In \cite{epperlein2025joint}, we showed that the JSR is pointwise Hölder continuous, and even locally Hölder continuous in dimension $d=2$. For a brief time, we hoped that this two-dimensional regularity could be lifted inductively to higher dimensions by working with principal submatrices and exploiting favorable representatives in the similarity orbit. The result of the present paper show that such an approach fails in general.

In Section~\ref{sec:higher-dimensionalPS}, we show that there exist finite matrix sets with the following property: across the entire similarity orbit, and for every principal submatrix pattern of order between $2$ and $d-1$, the JSR of the corresponding set of submatrices strictly exceeds that of the full set. That is, none of these submatrix JSRs are ever brought below the global JSR by similarity transformation. In fact, the principal submatrix JSRs are uniformly bounded away from the global JSR.

This phenomenon is intimately linked to the geometry of the Banach space defined by an extremal norm. The relevant obstruction lies in the size of certain projection constants: the norms of linear projections from the space onto subspaces of dimension $2$ to $d-1$. If all such projection norms are uniformly bounded away from $1$, then the desired similarity transformation does not exist.

We refer to norms with this property as shady, as their unit balls cast a persistent “shadow” on intermediate-dimensional subspaces. While classical projection constants have been widely studied \cite{grunbaumProjectionConstants1960, chalmersProofGrunbaumConjecture2010, deregowskaSimpleProofGrunbaum2023, johnsonBasicConceptsGeometry2001}, the specific constant we require — the minimal norm of projections of intermediate rank — has received comparatively little attention. Notable exceptions include \cite{bosznayNormsProjections1986, kobosUniformEstimateRelative2018, kobosHYPERPLANESFINITEDIMENSIONALNORMED2015}. Our construction hinges on identifying shady norms, whose geometry precludes uniform control over the JSRs of principal submatrices across the similarity orbit. A computational approach to this problem is presented in \cite{epperlein2025shadiness}.

The paper is organized as follows. We briefly collect some well known facts in \Cref{sec:preliminaries}. In \Cref{sec:auerbach} we show how Auerbach bases may be used to construct a transformation that normalizes the entries of the matrices in a given set. \Cref{sec:higher-dimensionalPS} is devoted to higher-dimensional principal submatrices and it is shown that it is in general not possible to uniformly normalize the joint spectral radius associated to such submatrix sets. Two types of estimates for the joint spectral radius of the higher dimensional principal submatrices are provided. These rely either on approximation techniques using John ellipsoids or on the Grünbaum-Deregowska-Lewandowska theorem, \cite{grunbaumProjectionConstants1960,deregowskaSimpleProofGrunbaum2023}. The results relate to an approximation theorem due to \cite{ando1998simultaneous} where it was shown to which extent quadratic norms can be used to approximate the joint spectral radius. 
We briefly comment on pairs of matrices in \Cref{sec:pairs} and conclude in \Cref{sec:outlook}.

\section{Preliminaries}
\label{sec:preliminaries}

Let $\N$ be the set of natural numbers including $0$. The real and complex field are denoted by $\R$, $\C$ and $\R_{\geq0}:=[0,\infty)$. The Euclidean norm on $\R^d, \C^d$ is denoted by $\|\cdot\|_2$ and this also denotes the induced operator norm, i.e., the spectral norm on $\R^{d\times d},\C^{d\times d}$.

Let $\K=\R,\C$, $d\geq 1$. For a bounded, nonempty set of matrices $\mathcal{M}\subseteq\K^{d\times d}$ we consider the set of arbitrary products of length $t$ defined by 
\begin{equation*}
    {\calS}_t:= \{ A(t-1) \dots A(0) \setsep A(s) \in {\calM}, s=0,\ldots,t-1 \} \,.
\end{equation*}
The \emph{joint spectral radius} of $\mathcal{M}$ is defined as 
\begin{equation}
    \rho(\mathcal{M)} := \lim_{t\to \infty} \sup \{ \|S\|_2 \setsep S\in {\calS}_t \}^{1/t}.
\end{equation}
It is known that taking the closure of $\mathcal{M}$ does not change the value of the joint spectral radius. Thus we will assume that $\mathcal{M}$ is compact from now on.

A helpful formulation of the joint spectral radius is in terms of operator norms. Given a norm $\|\cdot\|$ on $\K^d$ and its induced operator norm also denoted by $\|\cdot\|$, we define
\begin{equation*}
    \|\mathcal{M}\| := \max \{\|A\| ; A \in \mathcal{M}\}.
\end{equation*}
Then it is known, \cite{RotaStra60}, \cite{Jungers}, that
\begin{equation}
    \rho(\mathcal{M)} = \inf \{ \|\mathcal{M}\| ; \|\cdot\| \text{ is an operator norm} \}.
\end{equation}
An operator norm $\|\cdot\|$ is called \emph{extremal} for $\mathcal{M}$, if $\rho(\mathcal{M}) = \|\mathcal{M}\|$. An extremal norm is called a \emph{Barabanov norm}, if in addition for every $x\in \K^n$ there exists an $A \in \mathcal{M}$ such that
\begin{equation*}
    \|Ax\| = \rho(\mathcal{M}) \|x\|. 
\end{equation*}
A sufficient condition for the existence of Barabanov norms is, that the set $\mathcal{M}$ is irreducible, i.e. only the trivial subspaces $\{0\}$ and $\K^d$ are invariant under all $A\in \mathcal{M}$, \cite{Bara88,Wirt02}.

\section{Auerbach bases and normalization}
\label{sec:auerbach}

The main goal of this section is to show that if $\calM\subseteq \K^{d\times d}$ is a compact set of matrices with $\rho(\mathcal{M})>0$, then there exists a similarity transformation $T$ such that all $TAT^{-1}$, $A\in \mathcal{M}$, have entries of absolute value bounded by $\rho(\calM)$. For $\rho(\calM)=0$ this statement is obviously false.
We need some preliminary statements concerning properties of Auerbach bases for this.

Given a norm $\norm{\cdot}$ on $\K^d$ the \emph{dual norm} is defined by
\begin{equation}
    \norm{y}_* := \max \{ |\langle y,x\rangle| \setsep \norm{x} \leq 1 \}, \quad y \in \K^d.
\end{equation}
The following definition can be found e.g. in \cite{taylor1947geometric,weber2017pelczynski}.

\begin{defn}
Let $\norm{\cdot}$ be a norm on $\K^d$. A set of pairs $(x_i,y_i)_{i=1}^d$ in $\K^d\times \K^d$ is called a \emph{biorthogonal basis} of $\K^d$, if $\langle x_i,y_j\rangle = \delta_{ij}$. If, in addition, 
$\norm{x_i} = \norm{y_i}_* =1$, $i=1,\ldots,d$, then $(x_i,y_i)_{i=1}^d$ is an Auerbach or \emph{biorthonormal basis} of $\K^d$.
\end{defn}

It was shown by Auerbach that every finite dimensional normed real or complex space has a biorthonormal basis. We note the following small lemma.

\begin{lem}
\label{lem:Auerbach-transform}
Let $\norm{\cdot}$ be a norm on $\K^d$ with Auerbach basis $(x_i,y_i)_{i=1}^d$. Let $T\in \K^{d\times d}$ with columns $x_i$, $i=1,\ldots, d$. Then $\norm{\cdot}_T := \norm{T\cdot}$ is a norm for which the standard basis vectors form an Auerbach basis $(e_i,e_i)_{i=1}^d$. In particular,
\begin{equation}
\label{eq:vT-bounds}
    \norm{\cdot}_\infty \leq \norm{\cdot}_T \leq \norm{\cdot}_1.
\end{equation}
\end{lem}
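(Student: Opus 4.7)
The plan is to first verify the Auerbach property for $(e_i,e_i)_{i=1}^d$ with respect to $\norm{\cdot}_T$, and then derive the sandwich inequality \eqref{eq:vT-bounds} by exploiting exactly this biorthonormality in both the primal and dual norms.

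First I would observe that by construction $T e_i = x_i$, so $\norm{e_i}_T = \norm{T e_i} = \norm{x_i} = 1$, which disposes of the primal half of the Auerbach condition. For the dual half, I would compute the dual of $\norm{\cdot}_T$ via the change of variables $w = T v$:
\begin{equation*}
    \norm{z}_{T,*} = \sup_{\norm{v}_T \leq 1} |\langle z, v\rangle| = \sup_{\norm{w}\leq 1}|\langle z, T^{-1}w\rangle| = \norm{T^{-*} z}_*.
\end{equation*}
It remains to identify $T^{-*}$. The biorthogonality $\langle x_i, y_j\rangle = \delta_{ij}$ rewritten as $y_j^* x_i = \delta_{ij}$ says that the matrix $Y$ with columns $y_j$ satisfies $Y^* T = I$, so $T^{-*} = Y$ and hence $T^{-*} e_j = y_j$. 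Substituting gives $\norm{e_j}_{T,*} = \norm{y_j}_* = 1$, completing the Auerbach basis property.

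For the inequality \eqref{eq:vT-bounds}, I would use the two halves of the Auerbach condition separately. The upper bound is the triangle inequality applied to $v = \sum_i v_i e_i$:
\begin{equation*}
    \norm{v}_T \leq \sum_{i=1}^d |v_i|\,\norm{e_i}_T = \sum_{i=1}^d |v_i| = \norm{v}_1.
\end{equation*}
The lower bound comes from the dual half: for each coordinate $|v_i| = |\langle v, e_i\rangle| \leq \norm{v}_T \norm{e_i}_{T,*} = \norm{v}_T$, and taking the maximum over $i$ yields $\norm{v}_\infty \leq \norm{v}_T$.

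The only subtlety I anticipate is keeping the complex conjugation straight when passing between $T$, $T^{-1}$ and $T^{-*}$ through the standard sesquilinear pairing; as written above the identity $Y^*T = I$ takes care of this uniformly over $\K=\R,\C$. Everything else is an immediate application of the defining properties of an Auerbach basis, so the lemma really just repackages Auerbach's definition in the preferred coordinate system determined by the $x_i$.
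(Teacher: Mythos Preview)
Your proof is correct and follows essentially the same route as the paper: both compute $\norm{e_i}_T=\norm{x_i}=1$ directly, identify $\norm{\cdot}_{T,*}=\norm{T^{-*}\cdot}_*$ by the change of variables $w=Tv$, use the biorthogonality to see that the columns of $T^{-*}$ are the $y_i$, and then derive \eqref{eq:vT-bounds} from the triangle inequality (upper bound) and the dual pairing $|v_i|=|\langle v,e_i\rangle|\leq\norm{v}_T\norm{e_i}_{T,*}$ (lower bound). The only cosmetic difference is that the paper phrases the lower bound via the bidual identity $\norm{v}_T=\norm{v}_{T,**}$, which is exactly your dual pairing inequality.
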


\begin{proof}
By construction we have for $i=1,\ldots,d$ that $Te_i = x_i$ and so $\norm{e_i}_T=\norm{x_i} =1$, $i=1,\ldots,d$. In addition,
\begin{align*}
    \norm{e_i}_{T,*} &= \max \{ \abs{\langle e_i,x\rangle} \setsep \norm{x}_T \leq 1 \} = \max \{ |\langle e_i,x\rangle| \setsep \norm{Tx} \leq 1 \} \\
    &= \max \{ \abs{\langle e_i,T^{-1}z\rangle }\setsep \norm{z} \leq 1 \} \\
    &= \max \{ \abs{\langle T^{-\ast}e_i,z\rangle} \setsep \norm{z} \leq 1 \} = 
    \norm{T^{-\ast}e_i}_*
    \intertext{as we have an Auerbach basis, the rows of $T^{-1}$ are given by the $\overline{y}_i$, $i=1,\ldots,d$, so the columns of $T^{-\ast}$ are given by $y_i$, $i=1\ldots,d$, so we can continue}
    &= \norm{y_i}_* = 1.
\end{align*}
In summary, we have $\norm{e_i}_T=\norm{e_i}_{T,*}=1= \langle e_i,e_i\rangle$ 
and also $\langle e_i,e_j\rangle = \delta_{ij}$. This shows that $(e_i,e_i)_{i=1}^d$ is an Auerbach basis corresponding to $\norm{\cdot}_T$.

The left inequality of \eqref{eq:vT-bounds} follows from $\norm{e_i}_{T,*}=1$, $i=1,\ldots,d$. Then for any $x\in\K^d$ we have $\norm{x}_T = \norm{x}_{T,**} \geq |\langle x,e_i\rangle | = |x_i|$. On the other hand $\norm{x}_T \leq \sum_{i=1}^d 
\abs{x_i}\norm{e_i}_T = \norm{x}_1$. This concludes the proof.
\end{proof}

We are now ready to state the main result of this section.

\begin{prop}
\label{prop:diagonal1}
Let $\mathcal{M}\subseteq\K^{d\times d}$ be compact with $\rho(\calM) > 0$. Then there exists a $T\in \gl_d(\K)$ such that 
\begin{equation}
    \rho(\mathcal{M}) \geq \max 
    \{ |a_{ij} | \setsep 1\leq i,j \leq d, A=(a_{ij})_{i,j=1}^d \in T^{-1}\mathcal{M}T \}.
\end{equation}
\end{prop}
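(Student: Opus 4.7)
The plan is to apply \Cref{lem:Auerbach-transform} to an extremal norm of $\calM$, so that the entry-wise bound on the transformed matrix set falls out of the standard dual-norm inequality. After rescaling, assume $\rho(\calM) = 1$. If $\norm{\cdot}$ is a norm with $\norm{A} \leq 1$ for every $A \in \calM$, I would pick an Auerbach basis $(x_i, y_i)_{i=1}^d$ and let $T \in \gl_d(\K)$ have $x_i$ as its $i$-th column. By \Cref{lem:Auerbach-transform} the standard basis is Auerbach for the pulled-back norm $\norm{\cdot}_T$, and a short computation from $\norm{v}_T = \norm{Tv}$ yields $\norm{T^{-1}AT}_T = \norm{A}$. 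Writing $B = T^{-1}AT$, the entries then satisfy
\begin{equation*}
    \abs{b_{ij}} = \abs{\langle e_i, Be_j\rangle} \leq \norm{e_i}_{T,*}\,\norm{Be_j}_T \leq \norm{B}_T = \norm{A} \leq 1 = \rho(\calM),
\end{equation*}
which is the desired conclusion.

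The main obstacle is therefore the existence of such an extremal norm. When $\calM$ is irreducible and $\rho(\calM) > 0$, a Barabanov norm exists and is in particular extremal, so the argument above applies unchanged. For general compact $\calM$, however, extremal norms need not exist---the classical obstruction being a nontrivial Jordan structure at an eigenvalue of maximal modulus---and this is the delicate part of the argument.

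To handle reducible $\calM$ I would argue by induction on $d$. Choosing a basis adapted to a nontrivial common invariant subspace brings every $A \in \calM$ into block upper triangular form $A = \begin{pmatrix} A_{11} & A_{12} \\ 0 & A_{22} \end{pmatrix}$ with both $\calM_{ii}$ of dimension strictly less than $d$ and $\rho(\calM_{ii}) \leq \rho(\calM)$. The inductive hypothesis---or, in case $\rho(\calM_{ii}) = 0$, \Cref{lem:Auerbach-transform} applied to a Rota--Strang norm of tolerance strictly less than $\rho(\calM)$---produces $T_i$ normalizing $\calM_{ii}$. Setting $T_0 = \diag(T_1, T_2)$ then bounds the diagonal blocks of $T_0^{-1}\calM T_0$ by $\rho(\calM)$ but leaves the $(1,2)$-block uncontrolled; composing with $D_\lambda = \diag(I, \lambda I)$ preserves the diagonal blocks and multiplies the $(1,2)$-block by $\lambda$. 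By compactness of $\calM$ the family of $(1,2)$-blocks of $T_0^{-1}\calM T_0$ is uniformly bounded, so choosing $\lambda > 0$ small enough forces all off-diagonal entries below $\rho(\calM)$, and $T = T_0 D_\lambda$ is the desired transformation.
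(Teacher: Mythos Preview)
Your proof is correct and follows essentially the same approach as the paper: in the irreducible case you use an extremal norm, its Auerbach basis, and \Cref{lem:Auerbach-transform}---invoking the dual-norm inequality $|\langle e_i,Be_j\rangle|\le \norm{e_i}_{T,*}\norm{Be_j}_T$ where the paper uses the equivalent bound $\norm{\cdot}_\infty\le\norm{\cdot}_T$---and in the reducible case you block-triangularize and shrink the off-diagonal block by diagonal scaling, framed as an induction on $d$ (with the Rota--Strang norm handling zero-JSR blocks), whereas the paper passes directly to irreducible diagonal blocks. The only cosmetic slip is that after the initial change of basis $S$ bringing $\calM$ to block form you should write $T=S T_0 D_\lambda$ rather than $T=T_0 D_\lambda$.
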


\begin{proof}

We first consider the case that $\mathcal{M}$ is irreducible. Then we may choose an extremal norm $\norm{\cdot}$ for $\mathcal{M}$. Let $(x_i,y_i)_{i=1}^d$ be an Auerbach basis for $\norm{\cdot}$ and let $T$ be the invertible matrix from Lemma~\ref{lem:Auerbach-transform} such that the norm $\norm{\cdot}_T$ has an Auerbach basis $(e_i,e_i)_{i=1}^d$. Note that $\norm{\cdot}_T$ is extremal for $T^{-1}\mathcal{M}T$ because for all $A\in\mathcal{M}$ we have
\begin{equation*}
    \norm{T^{-1}AT}_T = \max_{x\neq 0} \frac{\norm{T^{-1}AT x}_T}{\norm{x}_T} = \max_{x\neq 0} \frac{\norm{AT x}}{\norm{Tx}} = \norm{A} \leq \rho(\mathcal{M)} = \rho(T^{-1}\mathcal{M}T).
\end{equation*}
Using \eqref{eq:vT-bounds} we get for arbitrary $A\in T^{-1}\mathcal{M}T$ and $j=1,\ldots,d$ that
\begin{equation}
\label{eq:Auerbachungleichung}
   \rho(\mathcal{M}) = \rho(T^{-1}\mathcal{M}T) \geq \norm{A e_j}_T \geq \|A e_j\|_\infty
   = \max_{i=1,\ldots,d} |a_{ij}|.
\end{equation}
As $j$ was arbitrary, this shows the assertion.

Otherwise, a similarity transformation brings all matrices $A\in \mathcal{M}$ into upper block-triangular form and in this form it is sufficient to consider the irreducible blocks on the diagonal. The entries in the off-diagonal blocks can be made arbitrarily small in absolute value by applying diagonal similarity scaling.
\end{proof}

For an illustration of the previous proof consider the following example which is a modification of \cite[Example 3.1]{guglielmiComputingSpectralGap2023}.
\begin{exam}
Consider  $\calM=\{A_1,A_2\}$
with 
\begin{align*}
    A_1 := \begin{pmatrix}
        6 & -4 \\
        7 & -4
    \end{pmatrix},\quad
    A_2 := \begin{pmatrix}
        -4 & 4 \\
        -5 & 4
    \end{pmatrix}.    
\end{align*}
Using the results presented in \cite[Example 3.1]{guglielmiComputingSpectralGap2023}, we know that the joint spectral radius of $\calM$ is $\rho(\calM)=(48+16\sqrt{5})^{1/5} \approx 2.4245$
and a spectrum maximizing product is given by $B:=A_1A_2A_1^2 A_2$, i.e. $\rho(\calM)^5 = \rho(B)$.
This claim may also be verified by checking that the following procedure generates an extremal norm for $\calM$.
Set $\tilde{A}_i:=\frac{1}{\rho(\calM)} A_i$, $i=1,2$.
Then 
$v_1:= (1,\frac{3+\sqrt{5}}{4})^\top$
is a right eigenvector of $B$ for the leading eigenvalue $\rho(\calM)^5$. Define $v_2:=\tilde{A}_2v_1$, $v_3:=\tilde{A}_1v_2$, $v_4:=\tilde{A}_1v_3$, $v_5:=\tilde{A}_2v_4$, $v_6:=\tilde{A}_2v_3$.
The unit ball $K$ of an extremal norm 
for $\calM$ is given by the convex hull
of $\{v_1,\dots,v_6,-v_1,\dots,-v_6\}$.
An Auerbach basis is given by
$(v_3,v_6)$ together with its dual basis, see Figure \ref{fig:auerbach-basis}.
\begin{figure}[h]
    \begin{center}
    \begin{tikzpicture}[scale=1.7]
\pgfdeclarelayer{foreground}
\pgfsetlayers{main,foreground}
\draw[color=black,-latex] (-1.50000000000000,0) -- (1.50000000000000,0);
\draw[color=black,-latex] (0,-1.50000000000000) -- (0,1.50000000000000);
\draw (-1,0.03) -- (-1,-0.03) node[below] {-1};
\draw (1,0.03) -- (1,-0.03) node[below] {1};
\draw (0.03,-1) -- (-0.03,-1) node[left] {-1};
\draw (0.03,1) -- (-0.03,1) node[left] {1};
\fill[color=Gray, opacity=0.100000000000000] (-1.447944167, -1.204099468) --(-0.7541075963, -1.41850676) --(1.447944167, 1.204099468) --(0.7541075963, 1.41850676) --cycle {};
\draw[color=Black, opacity =1.0] (-0.7541075963, -1.41850676) -- (1.447944167, 1.204099468);
\draw[color=Black, opacity =1.0] (-0.7541075963, -1.41850676) -- (-1.447944167, -1.204099468);
\draw[color=Black, opacity =1.0] (1.447944167, 1.204099468) -- (0.7541075963, 1.41850676);
\draw[color=Black, opacity =1.0] (0.7541075963, 1.41850676) -- (-1.447944167, -1.204099468);
\fill[color=CornflowerBlue, opacity=0.900000000000000] (1.0, 1.309016994) --(0.5613255774, 1.015447509) --(-0.3469182855, 0.1072036459) --(-0.5098205118, -0.09736705484) --(-0.7492166416, -0.5176959652) --(-1.101025882, -1.311303114) --(-1.0, -1.309016994) --(-0.5613255774, -1.015447509) --(0.3469182855, -0.1072036459) --(0.5098205118, 0.09736705484) --(0.7492166416, 0.5176959652) --(1.101025882, 1.311303114) --cycle {};
\draw[color=Black, opacity =1.0] (-1.0, -1.309016994) -- (-1.101025882, -1.311303114);
\draw[color=Black, opacity =1.0] (-1.0, -1.309016994) -- (-0.5613255774, -1.015447509);
\draw[color=Black, opacity =1.0] (-1.101025882, -1.311303114) -- (-0.7492166416, -0.5176959652);
\draw[color=Black, opacity =1.0] (0.3469182855, -0.1072036459) -- (0.5098205118, 0.09736705484);
\draw[color=Black, opacity =1.0] (0.3469182855, -0.1072036459) -- (-0.5613255774, -1.015447509);
\draw[color=Black, opacity =1.0] (0.5098205118, 0.09736705484) -- (0.7492166416, 0.5176959652);
\draw[color=Black, opacity =1.0] (0.7492166416, 0.5176959652) -- (1.101025882, 1.311303114);
\draw[color=Black, opacity =1.0] (-0.7492166416, -0.5176959652) -- (-0.5098205118, -0.09736705484);
\draw[color=Black, opacity =1.0] (0.5613255774, 1.015447509) -- (-0.3469182855, 0.1072036459);
\draw[color=Black, opacity =1.0] (0.5613255774, 1.015447509) -- (1.0, 1.309016994);
\draw[color=Black, opacity =1.0] (-0.5098205118, -0.09736705484) -- (-0.3469182855, 0.1072036459);
\draw[color=Black, opacity =1.0] (1.101025882, 1.311303114) -- (1.0, 1.309016994);
\begin{pgfonlayer}{foreground}
\node[circle, inner sep = 0.300000000000000 mm, fill, draw, color=Blue, opacity=1.0] (0) at (-1.0, -1.309016994) {};
\node[circle, inner sep = 0.300000000000000 mm, fill, draw, color=Blue, opacity=1.0] (1) at (-1.101025882, -1.311303114) {};
\node[circle, inner sep = 0.300000000000000 mm, fill, draw, color=Blue, opacity=1.0] (2) at (0.3469182855, -0.1072036459) {};
\node[circle, inner sep = 0.300000000000000 mm, fill, draw, color=Blue, opacity=1.0] (3) at (0.5098205118, 0.09736705484) {};
\node[circle, inner sep = 0.300000000000000 mm, fill, draw, color=Blue, opacity=1.0] (4) at (-0.5613255774, -1.015447509) {};
\node[circle, inner sep = 0.300000000000000 mm, fill, draw, color=Blue, opacity=1.0] (5) at (0.7492166416, 0.5176959652) {};
\node[circle, inner sep = 0.300000000000000 mm, fill, draw, color=Blue, opacity=1.0] (6) at (-0.7492166416, -0.5176959652) {};
\node[circle, inner sep = 0.300000000000000 mm, fill, draw, color=Blue, opacity=1.0] (7) at (0.5613255774, 1.015447509) {};
\node[circle, inner sep = 0.300000000000000 mm, fill, draw, color=Blue, opacity=1.0] (8) at (-0.5098205118, -0.09736705484) {};
\node[circle, inner sep = 0.300000000000000 mm, fill, draw, color=Blue, opacity=1.0] (9) at (-0.3469182855, 0.1072036459) {};
\node[circle, inner sep = 0.300000000000000 mm, fill, draw, color=Blue, opacity=1.0] (10) at (1.101025882, 1.311303114) {};
\node[circle, inner sep = 0.300000000000000 mm, fill, draw, color=Blue, opacity=1.0] (11) at (1.0, 1.309016994) {};
\end{pgfonlayer}n\fill[color=Gray, opacity=0.300000000000000] (-0.3469182854, 0.107203646) --(1.101025882, 1.311303114) --(0.3469182854, -0.107203646) --(-1.101025882, -1.311303114) --cycle {};
\draw[color=Black, opacity =1.0] (1.101025882, 1.311303114) -- (0.3469182854, -0.107203646);
\draw[color=Black, opacity =1.0] (1.101025882, 1.311303114) -- (-0.3469182854, 0.107203646);
\draw[color=Black, opacity =1.0] (0.3469182854, -0.107203646) -- (-1.101025882, -1.311303114);
\draw[color=Black, opacity =1.0] (-1.101025882, -1.311303114) -- (-0.3469182854, 0.107203646);
\draw[-latex, Black!90, very thick] (0,0) -- (1.10102588180999, 1.31130311398904) node[above right] {$v_3$};
 \draw[-latex, Black!90, very thick] (0,0) -- (0.346918285537653, -0.107203645890555) node[below right] {$v_6$};
 \end{tikzpicture}
    \end{center}
    \caption{The unit ball $K$ of an extremal norm.}
    \label{fig:auerbach-basis}
\end{figure}
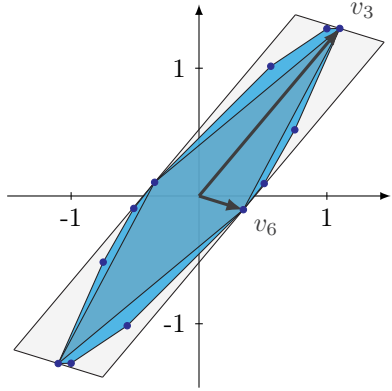

Set $T:=\begin{pmatrix} v_3 & v_6\end{pmatrix}$. Then $T^{-1}A_iT$, $i=1,2$ are given approximately by
\begin{align*}
        \begin{pmatrix}
            1.7454 & 2.1998 \\
            -1.6163 & 0.2546
        \end{pmatrix}, 
        \quad
        \begin{pmatrix}
        0 & -1.6498 \\
        2.4245 & 0
        \end{pmatrix}.
\end{align*}
The absolute values of all entries of these two transformed matrices
are now bounded by $\rho(\calM)$. See Figure \ref{fig:auerbach-basis-transformed}
\begin{figure}
    \begin{center}
\begin{tikzpicture}[scale=1.70000000000000]
\pgfdeclarelayer{foreground}
\pgfsetlayers{main,foreground}
\draw[color=black,-latex] (-1.10000000000000,0) -- (1.10000000000000,0);
\draw[color=black,-latex] (0,-1.10000000000000) -- (0,1.10000000000000);
\fill[color=Gray, opacity=0.100000000000000] (1, 1) --(-1, 1) --(-1, -1) --(1, -1) --cycle {};
\draw[color=Black, opacity =1.0] (-1, -1) -- (-1, 1);
\draw[color=Black, opacity =1.0] (-1, -1) -- (1, -1);
\draw[color=Black, opacity =1.0] (-1, 1) -- (1, 1);
\draw[color=Black, opacity =1.0] (1, -1) -- (1, 1);
\fill[color=CornflowerBlue, opacity=0.900000000000000] (-0.97971296, 0.2268238065) --(-0.7198781136, 0.6666666665) --(0.0, 1.0) --(0.1543471174, 0.97971296) --(0.4536476109, 0.7198781143) --(1.0, 0.0) --(0.97971296, -0.2268238065) --(0.7198781136, -0.6666666665) --(0.0, -1.0) --(-0.1543471174, -0.97971296) --(-0.4536476109, -0.7198781143) --(-1.0, 0.0) --cycle {};
\draw[color=Black, opacity =1.0] (0.97971296, -0.2268238065) -- (1.0, 0.0);
\draw[color=Black, opacity =1.0] (0.97971296, -0.2268238065) -- (0.7198781136, -0.6666666665);
\draw[color=Black, opacity =1.0] (1.0, 0.0) -- (0.4536476109, 0.7198781143);
\draw[color=Black, opacity =1.0] (0.0, 1.0) -- (0.1543471174, 0.97971296);
\draw[color=Black, opacity =1.0] (0.0, 1.0) -- (-0.7198781136, 0.6666666665);
\draw[color=Black, opacity =1.0] (0.1543471174, 0.97971296) -- (0.4536476109, 0.7198781143);
\draw[color=Black, opacity =1.0] (0.7198781136, -0.6666666665) -- (0.0, -1.0);
\draw[color=Black, opacity =1.0] (-0.4536476109, -0.7198781143) -- (-0.1543471174, -0.97971296);
\draw[color=Black, opacity =1.0] (-0.4536476109, -0.7198781143) -- (-1.0, 0.0);
\draw[color=Black, opacity =1.0] (-0.7198781136, 0.6666666665) -- (-0.97971296, 0.2268238065);
\draw[color=Black, opacity =1.0] (-0.1543471174, -0.97971296) -- (0.0, -1.0);
\draw[color=Black, opacity =1.0] (-1.0, 0.0) -- (-0.97971296, 0.2268238065);
\begin{pgfonlayer}{foreground}
\node[circle, inner sep = 0.300000000000000 mm, fill, draw, color=Blue, opacity=1.0] (0) at (0.97971296, -0.2268238065) {};
\node[circle, inner sep = 0.300000000000000 mm, fill, draw, color=Blue, opacity=1.0] (1) at (1.0, 0.0) {};
\node[circle, inner sep = 0.300000000000000 mm, fill, draw, color=Blue, opacity=1.0] (2) at (0.0, 1.0) {};
\node[circle, inner sep = 0.300000000000000 mm, fill, draw, color=Blue, opacity=1.0] (3) at (0.1543471174, 0.97971296) {};
\node[circle, inner sep = 0.300000000000000 mm, fill, draw, color=Blue, opacity=1.0] (4) at (0.7198781136, -0.6666666665) {};
\node[circle, inner sep = 0.300000000000000 mm, fill, draw, color=Blue, opacity=1.0] (5) at (0.4536476109, 0.7198781143) {};
\node[circle, inner sep = 0.300000000000000 mm, fill, draw, color=Blue, opacity=1.0] (6) at (-0.4536476109, -0.7198781143) {};
\node[circle, inner sep = 0.300000000000000 mm, fill, draw, color=Blue, opacity=1.0] (7) at (-0.7198781136, 0.6666666665) {};
\node[circle, inner sep = 0.300000000000000 mm, fill, draw, color=Blue, opacity=1.0] (8) at (-0.1543471174, -0.97971296) {};
\node[circle, inner sep = 0.300000000000000 mm, fill, draw, color=Blue, opacity=1.0] (9) at (0.0, -1.0) {};
\node[circle, inner sep = 0.300000000000000 mm, fill, draw, color=Blue, opacity=1.0] (10) at (-1.0, 0.0) {};
\node[circle, inner sep = 0.300000000000000 mm, fill, draw, color=Blue, opacity=1.0] (11) at (-0.97971296, 0.2268238065) {};
\end{pgfonlayer}n\fill[color=Gray, opacity=0.300000000000000] (1, 0) --(0, -1) --(-1, 0) --(0, 1) --cycle {};
\draw[color=Black, opacity =1.0] (-1, 0) -- (0, -1);
\draw[color=Black, opacity =1.0] (-1, 0) -- (0, 1);
\draw[color=Black, opacity =1.0] (0, -1) -- (1, 0);
\draw[color=Black, opacity =1.0] (0, 1) -- (1, 0);
\end{tikzpicture}
    \end{center}
    \caption{The transformed unit ball $T^{-1}K$.}
    \label{fig:auerbach-basis-transformed}
\end{figure}
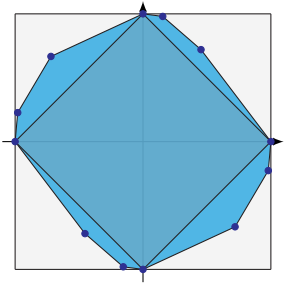
for an illustration of the transformed unit 
ball $T^{-1}K$ which clearly lies between the unit ball
of the $1$- and the $\infty$-norm.
\end{exam}

With the previous result we obtain the following characterization of the joint spectral radius which is close in spirit to \cite{chen2000characterization}.

\begin{prop}
Let $\mathcal{M}\subseteq\K^{d\times d}$ be compact. Then there exists a $T\in \gl_d(\K)$ such that 
\begin{align}
\label{eq:diagonal_char}
    \rho(\mathcal{M}) &= \sup_{t\geq 1} \max \{ |\langle e_i,S e_i \rangle|^{1/t} \; | \;  i=1,\ldots,d, S \in T^{-1}\mathcal{S}_tT \} \\
    &= \limsup_{t \to \infty} \max \{ |\langle e_i,S e_i \rangle|^{1/t} \; | \;  i=1,\ldots,d, S \in T^{-1}\mathcal{S}_tT \}.
\end{align}
\end{prop}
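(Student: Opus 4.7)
The plan is to combine Proposition~\ref{prop:diagonal1} with the Berger--Wang formula. Assuming first $\rho(\calM)>0$, I choose $T\in\gl_d(\K)$ as in Proposition~\ref{prop:diagonal1}, so that the transformed norm $\norm{\cdot}_T$ is extremal for $T^{-1}\calM T$, satisfies the sandwich $\norm{\cdot}_\infty\leq\norm{\cdot}_T\leq\norm{\cdot}_1$ from \eqref{eq:vT-bounds}, and admits $(e_i,e_i)_{i=1}^d$ as its Auerbach basis. The degenerate case $\rho(\calM)=0$ reduces, via the block-triangularization argument already used in Proposition~\ref{prop:diagonal1}, to a simultaneously strictly upper triangular form; every product then has vanishing diagonal and \eqref{eq:diagonal_char} holds trivially.

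For the upper bound $\sup_{t\geq 1}(\cdots)\leq\rho(\calM)$, I observe that any $S\in T^{-1}\calS_tT$ is a product of $t$ factors from $T^{-1}\calM T$, each of $\norm{\cdot}_T$-norm at most $\rho(\calM)$, so by submultiplicativity $\norm{S}_T\leq\rho(\calM)^t$. Combining with the left inequality of \eqref{eq:vT-bounds} and $\norm{e_i}_T=1$ yields
\begin{equation*}
    \abs{\langle e_i,Se_i\rangle}\leq\norm{Se_i}_\infty\leq\norm{Se_i}_T\leq\norm{S}_T\,\norm{e_i}_T\leq\rho(\calM)^t,
\end{equation*}
so $\abs{\langle e_i,Se_i\rangle}^{1/t}\leq\rho(\calM)$ uniformly in $t$, $i$ and $S$.

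For the matching lower bound on the $\limsup$, I invoke the Berger--Wang formula: for every $\epsilon>0$ there exist $t\geq 1$ and $S'\in\calS_t$ with $\rho(S')^{1/t}>\rho(\calM)-\epsilon$. Setting $\tilde S:=T^{-1}S'T$, one has $\tilde S^n\in T^{-1}\calS_{tn}T$ for all $n$, and the bridge to diagonal entries is $\max_i\abs{(\tilde S^n)_{ii}}\geq\abs{\trace(\tilde S^n)}/d$. Since $\trace(\tilde S^n)=\sum_k\lambda_k(\tilde S)^n$, a standard Kronecker/Dirichlet simultaneous approximation argument applied to the phases of the eigenvalues of maximal modulus produces a subsequence $(n_j)$ along which $\abs{\trace(\tilde S^{n_j})}^{1/n_j}\to\rho(\tilde S)=\rho(S')$. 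Hence $\max_i\abs{(\tilde S^{n_j})_{ii}}^{1/(tn_j)}\to\rho(S')^{1/t}>\rho(\calM)-\epsilon$, so $\limsup_{m\to\infty}(\cdots)\geq\rho(\calM)-\epsilon$. Letting $\epsilon\to 0$ and using the trivial bound $\sup\geq\limsup$ together with the upper bound above, both quantities in \eqref{eq:diagonal_char} equal $\rho(\calM)$.

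The main technical obstacle is the Kronecker step: when $\tilde S$ has several eigenvalues of equal maximal modulus, their phases could cancel in $\trace(\tilde S^n)$ for generic $n$, so one must extract a subsequence along which all ratios $(\lambda_k/\rho(\tilde S))^{n_j}$ simultaneously cluster near $1$. With this Diophantine input secured, the remainder of the argument is just an assembly of Proposition~\ref{prop:diagonal1} with the elementary trace-bound bridge.
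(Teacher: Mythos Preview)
Your proof is correct and follows the same architecture as the paper: identical upper bound via the Auerbach extremal norm and \eqref{eq:vT-bounds}, and the same trace-to-diagonal bridge $\max_i\abs{S_{ii}}\geq \abs{\trace S}/d$ for the lower bound. The only difference is that the paper obtains the lower bound by directly citing the trace characterization $\rho(\calM)=\limsup_{t\to\infty}\max_{S\in\calS_t}\abs{\trace S}^{1/t}$ from \cite{chen2000characterization}, whereas you rederive the piece you need from Berger--Wang plus a Dirichlet simultaneous-approximation argument on the eigenvalue phases of a single near-optimal product. Your route is slightly more self-contained but does extra work; the paper's citation is cleaner but relies on an external result that is itself proved by essentially the same mechanism. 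One small wording issue: in the reducible case with $\rho(\calM)>0$ there need not be a global extremal norm, so your sentence ``the transformed norm $\norm{\cdot}_T$ is extremal'' should, as in the paper, be read after the standard reduction to irreducible diagonal blocks (only diagonal entries matter, and these decouple blockwise).
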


\begin{proof}
If $\rho(\calM)=0$, then $\calM$ is reducible, see \cite[Section~2.3.1]{Jungers}, and after a similarity transformation all matrices $A\in T^{-1}\calM T$ are upper triangular with zero diagonal. It follows by induction that all $S \in T^{-1}\mathcal{S}_tT$ have the same structure and the claim is obvious.
 We may thus assume $\rho(\calM) >0$.

If $\rho(\calM) >0$, we assume without loss of generality that $\calM$ is irreducible.
As in the proof of \Cref{prop:diagonal1} fix $T$ using \Cref{lem:Auerbach-transform}.
We may apply the chain of inequalities in \eqref{eq:Auerbachungleichung}
to all $S\in T^{-1}\mathcal{S}_tT$. 
As $\norm{\cdot}_T$ is extremal for $T^{-1}\calM T$, it is also extremal for $T^{-1}\mathcal{S}_tT$.
This yields
\begin{equation*}
    \rho(\calM)^t \geq \norm{Se_i}_T \geq |\langle e_i,S e_i \rangle|, \quad i=1,\ldots,d, t\geq 1, S \in T^{-1}\mathcal{S}_tT.
\end{equation*}
This shows $\geq$ in \eqref{eq:diagonal_char}. 

For the converse recall that by \cite{chen2000characterization} we know that
\begin{multline*}
    \rho(\mathcal{M}) = \rho(T^{-1} \calM T) = \limsup_{t\to\infty} \max_{S\in T^{-1}\mathcal{S}_tT} |\trace S|^{1/t} = \\ \limsup_{t\to\infty} \max_{S\in T^{-1}\mathcal{S}_tT} \left|\frac{1}{d}\trace S\right|^{1/t} 
    \leq
    \limsup_{t \to \infty} \max_{S\in T^{-1}\mathcal{S}_tT} \max_{i=1,\ldots,d}\left|\langle e_i, Se_i\rangle\right|^{1/t}.
\end{multline*}
This completes the proof.
\end{proof}

\section{Higher dimensional principal
  submatrices}
\label{sec:higher-dimensionalPS}

It seems natural to ask for a generalization
of the results of the previous section to 
larger submatrices. The question is then whether we
can use a similarity transformation to
ensure that the joint spectral
radius of all principal submatrices
is bounded by the joint spectral radius of the whole matrix set.
The aim of this section is to show, that
this is in general only possible
up to a multiplicative constant and 
that this question has intricate connections
to convex geometry and the local theory of Banach spaces.

We begin by fixing some necessary notation. Let $J \subseteq \{1,\dots,d\}$ be a set of indices of cardinality $k\geq 1$.
For a matrix $A \in \mathbb{K}^{d \times d}$
we denote by
$A_{J,J}$ the principal submatrix of $A$ obtained
by deleting all rows and columns whose index is not in $J$.
Similarly, for a set of matrices $\mathcal{M} \subseteq\mathbb{K}^{d \times d}$
we define $\mathcal{M}_{J,J} := \{A_{J,J} \setsep A \in \mathcal{M}\}$.
We denote by $I_J$ the matrix obtained from the $d \times d$
  identity matrix by deleting all columns whose index is not in $J$.
  For any matrix $A \in \mathbb{K}^{d \times d}$ we have $A_{J,J}=I_J^\top A I_J$.
  Furthermore, $I_J^\top I_J$ is the $k \times k$ identity matrix
  and $P_J:= I_J I_J^\top$ is the projection
  to the image of $I_J$ whose kernel is spanned
  by the columns of $I_{\{ 1,\dots,d\} \setminus J}$.
  
  For a norm $\norm{\cdot}$ denote
  by $B_{r,\norm{\cdot}}$ the closed ball of radius $r$ around $0$
  with respect to $\norm{\cdot}$.

We start with a negative result which shows that 
the direct generalization of 
\Cref{prop:diagonal1}
to larger submatrices is false.

\begin{thm}
  \label{thm:lower-bound-upper-left}
  For every $d \geq 3$ there is a finite set of matrices $\calM$ with
  $\rho(\calM)=1$ and a constant $C>1$,
  such that for all $T \in \gl_d(\R)$ and for all
  $J \subseteq \{1,\dots,d\}$ with $\abs{J} \in \{2,\dots,d-1\}$ we
  have $\rho((T^{-1} \calM T)_{J,J}) \geq C$.
\end{thm}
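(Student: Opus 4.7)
The plan is to build the argument around the notion of a \emph{shady} norm: a norm $\norm{\cdot}$ on $\R^d$ such that every linear projection of rank $k \in \{2,\ldots,d-1\}$ has operator norm at least some constant $C > 1$. This property is a linear invariant, since conjugation by $T \in \gl_d(\R)$ maps projections of $\norm{\cdot}$ to projections of the transported norm $\norm{T\cdot}$ of the same rank and operator norm. The first step will be to invoke the existence of shady norms for every $d \geq 3$, the content of the companion paper \cite{epperlein2025shadiness}, which exhibits centrally symmetric polytopes $K$ whose projection constants onto every intermediate-dimensional subspace strictly exceed $1$ uniformly.

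Second, given a shady polytope $K$ with shadiness constant $C > 1$ and Minkowski functional $\norm{\cdot}_K$, I would produce a finite matrix set $\calM \subseteq \R^{d \times d}$ with $\rho(\calM) = 1$ and $\norm{\cdot}_K$ as an extremal norm. A natural route is to take $\calM$ as an irreducible finite subset of the linear automorphism group $\{A \in \gl_d(\R) : AK = K\}$ containing at least one element of spectral radius $1$, so that $K$ is preserved and $\rho(\calM)=1$; if automorphisms are too scarce, one can instead use a Guglielmi--Zennaro-style iterative construction to force $K$ to be (up to scale) the unique extremal unit ball of $\calM$. For any $T \in \gl_d(\R)$ the similarity $T^{-1}\calM T$ then has extremal norm $\norm{T\cdot}_K$, whose unit ball $T^{-1}K$ is again shady with the same constant.

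The central step is to convert this shadiness into the desired lower bound $\rho((T^{-1}\calM T)_{J,J}) \geq C$. By shadiness of $\norm{T\cdot}_K$, the coordinate projection $P_J$ has operator norm at least $C$ with respect to this extremal norm, for every $J$ with $|J| \in \{2,\ldots,d-1\}$. Arguing by contradiction, suppose $\rho((T^{-1}\calM T)_{J,J}) < C$. Then there exists a norm $\nu$ on $V_J := \mathrm{span}\{e_j : j \in J\}$ satisfying $\norm{A_{J,J}}_\nu < C$ for every $A \in T^{-1}\calM T$; the idea is to lift $\nu$ to a norm $\tilde\nu$ on $\R^d$ for which the coordinate projection $P_J$ has operator norm strictly less than $C$ while $\tilde\nu$ remains sub-extremal for $T^{-1}\calM T$, contradicting shadiness. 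The hard part is this lifting step: the naive product construction $\tilde\nu(x) := \max(\nu(P_J x), \eta((I-P_J)x))$ with a norm $\eta$ on $V_{J^c}$ achieves $\norm{P_J}_{\tilde\nu} = 1$, but is sub-extremal for $\calM$ only when the matrices respect the block decomposition $V_J \oplus V_{J^c}$, which generically fails because off-diagonal blocks couple the two subspaces. Overcoming this requires engineering the construction of $\calM$ and $K$ together so that the contradiction can be pushed through uniformly across all intermediate $J$ and all similarities $T$; this coupling between the combinatorial structure of $\calM$ and the geometric shadiness of $K$ is the technical core of the proof.
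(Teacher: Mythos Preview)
Your proposal has a genuine gap at the core step, and you essentially acknowledge it yourself. The contradiction argument cannot work as stated: shadiness is a property of the \emph{specific} norm $\norm{\cdot}_K$ (and its transports), not of every sub-extremal norm for $\calM$. So even if you managed to lift a small-submatrix norm $\nu$ to a norm $\tilde\nu$ on $\R^d$ with $\norm{P_J}_{\tilde\nu}<C$ and $\norm{\calM}_{\tilde\nu}\le 1$, this contradicts nothing --- it is perfectly consistent for $\calM$ to have one shady extremal norm and other (sub-)extremal norms with small projections. Uniqueness of the extremal norm does not help either, since sub-extremal norms need not be close to the extremal one in any useful sense. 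Your proposed constructions of $\calM$ also do not supply the needed leverage: a generic shady polytope has automorphism group $\{\pm I\}$, and a Guglielmi--Zennaro-type set forcing $K$ as extremal ball gives no grip on submatrices.

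The paper's proof avoids the lifting problem entirely by a direct eigenvalue argument, and the key idea is the \emph{specific} choice of $\calM$. For a shady polytope $K$, take $\calM$ to be the finite set of rank-one maps $A_{v,F}$ (one for each vertex $v$ and facet $F$ of $K$) with $\norm{A_{v,F}}=1$ and $A_{v,F}(F)=\{v\}$; these exist by an elementary construction. Given any $T$ and $J$, let $Q=TP_JT^{-1}$ be the associated projection; shadiness gives a vertex $v$ with $\alpha:=\norm{Qv}\ge C$, and $\tfrac{1}{\alpha}Qv$ lies in some facet $F$. Then $QA_{v,F}(Qv)=\alpha Qv$, so $Qv$ is an eigenvector of $QA_{v,F}\vert_{\mathrm{im}\,Q}$ with eigenvalue $\alpha$, which after conjugation by $T$ says exactly that $(T^{-1}A_{v,F}T)_{J,J}$ has an eigenvalue $\alpha\ge C$. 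Thus a single matrix in $\calM$ already witnesses $\rho((T^{-1}\calM T)_{J,J})\ge C$. The point you were missing is that $\calM$ must be engineered so that \emph{every} direction on the boundary of $K$ can be sent to \emph{every} vertex by some matrix in $\calM$; this richness is what converts the norm inequality $\norm{Q}\ge C$ into a spectral inequality for a submatrix.
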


We postpone the proof and first present some
 key ingredients. One of these 
is the following result of Bosznay and Garay.
To simplify notation we make the following definition.
\begin{defn}
\label{defn:shady}
For $C>1$ we call a norm $\norm{\cdot}$ on $\R^d$ \emph{$C$-shady}\footnote{As far as we are aware, there is no established name for this class of norms. We picked the term \enquote{shady} for
the following reason: The unit ball of such a norm always
casts a shadow on every proper linear subspace of dimension at least two, regardless of where the light is coming from.}  if for every
  projection $P: \R^d \to \R^d$ with $\rank(P) \in \{2,\dots,d-1\}$
  we have for the induced norm that $\norm{P}\geq C$.
  We call a norm \emph{shady} if it is $C$-shady for some $C>1$.
\end{defn}

Note that by the Hahn-Banach theorem for every norm $\|\cdot\|$ on $\R^d$ and every one-dimensional subspace $V$ there is a projection onto $V$ of norm $1$. Also trivially the only projection of rank $d$ has norm $1$. Therefore the rank restriction in Definition~\ref{defn:shady} captures all cases where something interesting can happen.

For the next statement recall that a {\em polytopal norm} on $\R^d$ is a norm whose unit ball is a polytope.

\begin{thm}[Bosznay and Garay 1986 \cite{bosznayNormsProjections1986}]
  \label{lem:bosznay-garay}
  For every $d\geq 3$ there is a shady polytopal norm on $\R^d$.
\end{thm}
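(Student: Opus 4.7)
The plan is to exhibit a shady polytopal norm by combining a genericity argument with the Hausdorff-density of polytopal unit balls in the space of symmetric convex bodies on $\R^d$. For each $k\in\{2,\dots,d-1\}$, let $\mathcal{S}_k$ denote the set of symmetric norms on $\R^d$ admitting a rank-$k$ projection of operator norm exactly $1$. A preliminary observation is that $\mathcal{S}_k$ is closed in the Hausdorff topology on symmetric unit balls: rank-$k$ projections are parametrized by complementary pairs $(V,W)\in\mathrm{Gr}(k,d)\times\mathrm{Gr}(d-k,d)$, and any sequence of norm-$1$ projections in converging norms stays in a compact locus of transversal pairs (since the operator norms in comparable ambient norms are bounded), so a limit point yields a norm-$1$ projection in the limit norm. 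In particular, the complement of $\bigcup_{k=2}^{d-1}\mathcal{S}_k$ is open.

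The crux is to show that this complement contains a polytopal norm. Geometrically, $\|P\|=1$ forces the shadow of the unit ball $B$ along $\ker(P)$ onto $\operatorname{im}(P)$ to coincide with the cross-section $B\cap\operatorname{im}(P)$, a rigid coincidence reminiscent of Kakutani's characterization of Hilbert spaces via $1$-complementedness of every two-dimensional subspace. This coincidence should fail for polytopes in sufficiently general position. I would therefore pick a centrally symmetric polytope with $2N$ vertices $\pm v_1,\dots,\pm v_N$ (with $N$ large) whose coordinates are algebraically independent, and argue via a dimension count that for every $k$ and every $(V,W)$ the polynomial inequalities encoding \enquote{$P_{V,W}(v_i)\in B$ for all $i$, with equality in at least one facet} have no solution, since a solution would force an algebraic relation among the coordinates. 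This genericity step is the main technical obstacle: the existential quantifiers over the infinitely many $(V,W)$ and over the combinatorial patterns describing which facets of $B$ are saturated must be handled uniformly, via a transversality argument over the compact parameter space.

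Finally, given such a polytopal norm $\|\cdot\|_*\notin\bigcup_k\mathcal{S}_k$, the Kadec--Snobar bound $\inf_P\|P\|_*\le\sqrt{k}$ confines minimizing projections to a compact locus on which the map $(V,W)\mapsto\|P_{V,W}\|_{\|\cdot\|_*}$ is continuous. The infimum over this locus is attained at some value $C_k>1$ for each $k$, and setting $C:=\min_{k=2}^{d-1}C_k>1$ shows that $\|\cdot\|_*$ is $C$-shady. A more hands-on alternative, particularly appealing in small dimensions, would be to exhibit an explicit polytope in $\R^3$ and extend to higher dimensions by direct sums or products; even in this concrete setting, however, verifying the no-contractive-projection property requires the same kind of combinatorial-algebraic check.
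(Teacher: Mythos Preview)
The paper does not give its own proof of this statement: it is quoted as a theorem of Bosznay and Garay, and the surrounding text notes that their result is in fact stronger—shady norms are \emph{dense} among all norms on $\R^d$ (in the sup-norm on the Euclidean unit ball). The paper also cites Singer's earlier explicit shady norm on $\R^3$ (a dodecahedron with cut-off vertices), so there is no in-paper argument for you to match.

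Your outline is in the same genericity spirit as the original and the topological framing is reasonable. The closedness of each $\mathcal{S}_k$ can indeed be made rigorous; the cleanest way to rule out rank drop in the limit is to note that a limit of idempotents is idempotent and that $\operatorname{trace}$ equals $\operatorname{rank}$ on idempotents, so the limiting projection still has rank $k$. Your phrasing via a ``compact locus of transversal pairs'' is slightly off, since the transversal pairs form an \emph{open} subset of $\mathrm{Gr}(k,d)\times\mathrm{Gr}(d-k,d)$; compactness lives on the matrix side, not the Grassmannian side.

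The genuine gap is the step you yourself flag as ``the main technical obstacle.'' Asserting that algebraically independent vertex coordinates \emph{should} preclude the shadow/cross-section coincidence is a hope, not an argument. A norm-$1$ rank-$k$ projection $P$ with $P(B)=B\cap\operatorname{im}(P)$ is governed by an infinite family of candidate $(V,W)$ and, for each, a finite but combinatorially rich pattern of which vertices land on which facets; you have to either carry out the transversality/dimension count uniformly over this parameter space, or exhibit and verify an explicit polytope. Until that step is actually executed, what you have is a plausible plan rather than a proof.
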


There are earlier examples of shady norms. For instance
Singer in \cite[Chapter II, Theorem 1.1, p. 217]{singerBasesBanachSpaces1970}
shows for a particular norm on $\R^3$, whose unit ball
is a dodecahedron with
suitably cut-off vertices, that it is shady.  Bosznay and Garay showed
that the shady normed spaces are actually dense in the
space of norms, considered as bounded continuous functions
on the Euclidean unit ball, see \cite{bosznayNormsProjections1986} for details. For explicit examples of $C$-shady norms $\norm{\cdot}$ and bounds
on the constant $C$ see the work of Kobos in \cite{kobosUniformEstimateRelative2018}
and \cite{kobosHYPERPLANESFINITEDIMENSIONALNORMED2015}.

We use the following lemma to construct a specific matrix set
tailored to a shady norm. 

\begin{lem}
  \label{lem:taylored-matrix-sets-exist}
  Let $K$ be the unit ball of a polytopal norm $\norm{\cdot}$ in $\R^d$.
  For every vertex $v$ and face $F$ of $K$ there is a
  rank one matrix $A$ with operator norm $\norm{A} = 1$ which
  maps $F$ to $v$.
\end{lem}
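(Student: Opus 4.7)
The plan is to build $A$ as the rank-one operator $Ax := \ell(x)\, v$, where $\ell \in (\R^d)^*$ is a linear functional supporting the face $F$. This is essentially the only shape a rank-one map with image $\R v$ can take, and the requirement that $A$ sends every $x \in F$ to $v$ forces $\ell$ to be identically $1$ on $F$.

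First, the existence of such an $\ell$ is immediate from the definition of a face of a polytope: there is a supporting hyperplane $H = \{x : \ell(x) = 1\}$ with $F = K \cap H$ and $\ell(x) \leq 1$ for all $x \in K$. With this choice, $A$ is visibly rank one and sends every $x \in F$ to $v$. What remains is the norm equality $\norm{A} = 1$. Since $A$ factors through the scalars, the operator norm splits as $\norm{A} = \norm{\ell}_* \cdot \norm{v}$. The identity $\norm{v} = 1$ holds because $v$ is a vertex of the unit ball $K$. For the dual norm, the bound $\norm{\ell}_* \geq 1$ is immediate from $\ell \equiv 1$ on $F \subseteq K$; the matching upper bound $\norm{\ell}_* \leq 1$ uses the central symmetry $K = -K$ of the unit ball of any norm: if $x \in K$ then $-x \in K$, so $-\ell(x) = \ell(-x) \leq 1$, yielding $|\ell(x)| \leq 1$ on all of $K$.

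The only mildly delicate point, and the closest thing to an obstacle I foresee, is precisely this transition from the one-sided estimate $\ell \leq 1$ on $K$, which comes from $\ell$ being a supporting functional, to the two-sided estimate $|\ell| \leq 1$ needed to control the \emph{dual norm} rather than merely the support functional of $K$. The central symmetry of $K$, which is not used in the construction of $\ell$, is exactly what bridges this gap. Beyond that, no real difficulty arises: the polytopal hypothesis on $\norm{\cdot}$ is used only to ensure that vertices and faces are genuine combinatorial objects on which the construction can be performed, and the same argument would go through verbatim for any centrally symmetric convex body with an exposed point in place of $v$ and an exposed face in place of $F$.
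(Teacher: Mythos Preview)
Your proof is correct and essentially identical to the paper's: both take $A x = \varphi(x)\,v$ for a supporting functional $\varphi$ of $F$, and both verify $\norm{A}=1$ via $\abs{\varphi}\le 1$ on $K$ together with $\norm{v}=1$. The only cosmetic difference is that the paper phrases the norm bound as $AK \subseteq K$ rather than factoring through $\norm{\varphi}_*\norm{v}$, and it asserts $\abs{\varphi}\le 1$ without explicitly naming central symmetry as you do.
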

\begin{proof}
  Let $F$ be a face of $K$ and let $v$ be a vertex of $K$.
  Let $w$ be a vertex of $F$. Let $H$ be a supporting
  hyperplane of $F$, i.e. $(H+w) \cap K = F$.
  Let $\varphi: \R^d \to \R$
  be the linear functional whose kernel is $H$
  and which is $1$ on $w$.
  Then $\abs{\varphi(x)} \leq \varphi(y)=1$
  for all $x \in K$ and $y \in F$.

  Let $A$ be the rank one matrix defined
  by $Ax=\varphi(x)v$. 
  We have \[AK \subseteq \{\alpha v \setsep \alpha \in [-1,1]\} \subseteq K\]
  and for $y \in F$ we have 
  $Ay=\varphi(y)v = v$.
  Hence $A$ is a matrix with operator norm $\norm{A}=1$
  which maps $F$ to $v$.
  \end{proof}

\begin{proof}[Proof of \Cref{thm:lower-bound-upper-left}]
  Let $\norm{\cdot}$ be a $C$-shady polytopal norm on $\R^d$ for some $C>1$.
  Such a norm exists by \Cref{lem:bosznay-garay}. Let $K$ be the
  unit ball of $\norm{\cdot}$.
  For every vertex $v$ of $K$ and every
  face $F$ of $K$ let
  $A_{v,F}$ be a matrix with operator norm $\norm{A_{v,F}}=1$
  which maps $F$ to $v$.
  The existence of these maps is ensured by
  \Cref{lem:taylored-matrix-sets-exist}.
  Define
  \begin{align*}
    \calM := \{A_{v,F} \setsep \text{$v$ is a vertex of $K$, $F$ is a
    facet of $K$}\}.
  \end{align*}
  Since $K$ is a polytope, the collection of matrices $\calM$
  is finite. Also it is easy to see that $\calM$ is irreducible.
  
  Let $T \in \R^{d \times d}$ be invertible
  and let $w_1,\dots,w_d$ be the columns of $T$.
  Let $J$ be an index set with cardinality between $2$ and $d-1$.
  Let $W$ be the subspace spanned by $w_j, j \in J$
  and let $U$ be the subspace spanned by $w_i, i \not \in J$,
  thus $\R^d = W \oplus U$.
  Let $Q$ be the projection on $\R^d$ with image $W$ and kernel $U$.
  Notice that $T^{-1}QT=P_J$.
  By the assumption on $\norm{\cdot}$ we have $\norm{Q} \geq C$.
  Since $K$ is a polytope, there is a vertex $v$ of $K$ with $\alpha:=\norm{Qv}\geq C$.
  There is a facet $F$ of $K$ such that $\frac{1}{\alpha}Qv \in F$.
  Then $A_{v,F}\frac{1}{\alpha}Qv=v$.
  For $w:=Qv$ we obtain $QA_{v,F}w=\alpha Qv=\alpha w$.
  Since $W$ is spanned by the columns of $TI_J$, there is $z \in \R^{\abs{J}}$ such that $T I_J z=w$.
  Hence
  \begin{align*}
    (T^{-1}A_{v,F} T)_{J,J} z &= I_J^\top T^{-1} A_{v,F} T I_J z \\
                              &=I_J^\top T^{-1} A_{v,F} w \\
                              &=I_J^\top I_J I_J^\top T^{-1} A_{v,F} w \\
                              &=I_J^\top T^{-1} Q A_{v,F} w \\
                              &=\alpha I_J^\top T^{-1} w \\
                              &=\alpha z.
  \end{align*}
  This show that
  $\rho(T^{-1}\calM T)_{J,J} \geq \rho((T^{-1}A_{v,F} T)_{J,J})\geq
  \alpha \geq C$.
\end{proof}

\begin{rem}
    \label{rem:shadinessconstant} In relation to \Cref{thm:lower-bound-upper-left}, we can consider the following "submatrix constant" for dimension $d$. For a fixed dimension $d\geq 3$ consider the supremum of the values $C$  for which there exists a finite matrix set $\calM \subseteq \R^{d\times d}$, $\rho(\calM)=1$, such that for all $T\in \gl_d(\R)$, for all $J\subseteq \{1,\ldots,d\}$, $|J|\in \{2,\ldots,d-1\}$ we have $\rho(((T^{-1} \calM T)_{J,J}) \geq C$. The proof of \Cref{thm:lower-bound-upper-left} shows directly, that for any $C$-shady polytopal norm we can construct an associated finite, irreducible matrix set with constant $C$. So the submatrix constant is bigger or equal than the supremal polytopal shadiness in dimension $d$.

    On the other hand let $\calM$ be a finite irreducible set of matrices with 
    $\rho(\calM)=1$ such that for all $T\in \gl_d(\R)$ and for all $J\subseteq \{1,\ldots,d\}$, $|J|\in \{2,\ldots,d-1\}$ we have $\rho(((T^{-1} \calM T)_{J,J}) \geq C$. Let $\norm{\cdot}$ be an extremal norm 
    for $\calM$ (which is not necessarily polytopal) and let $P$ be a rank $k$ projection with $2 \leq k \leq d-1$.
    There is $T \in \gl_d(\R)$ such that 
    $T^{-1}PT$ is the projection onto the first $k$ coordinates.
    Then $(T^{-1}\calM T)_{J,J}$ with $J=\{1,\dots,k\}$
    has joint spectral radius at most $\norm{T^{-1}PT}_T=\norm{P}$.
    Hence $\norm{P} \geq C$. This shows that $\norm{\cdot}$ is $C$-shady. Therefore the supremal shadiness in dimension $d$ is larger or equal to the submatrix constant.
     We omit the details on the relation of supremal polytopal shadiness and supremal shadiness and point to \cite{epperlein2025shadiness} for a proof that these constants are actually the same. 
\end{rem}

In the following example we construct a finite set $\calM$ of real $3\times 3$
matrices with joint spectral radius $1$, for which the set of upper left
$2 \times 2$ principal submatrices of $T^{-1}\calM T$ have joint spectral radius at least $1.01$ for every $T \in \gl_3(\R)$.

\begin{exam}
\label{exam:icosahedron}
  Consider the following set of points in $\R^3$,
  \begin{align*}
    V = \left\{
    \vect{1\\a\\c},
    \vect{1\\b\\c},
    \vect{a\\c\\1},
    \vect{b\\c\\1},
    \vect{c\\1\\a},
    \vect{c\\1\\b}\right\}
  \end{align*}
  for $a = -0.6$, $b=-0.2$, $c=0.1$.
  Let $K$ be the polytope with vertices $V \cup -V$ depicted in
  \Cref{fig:icosahedron}.
  Combinatorially this polytope is an icosahedron.
  Since it is centrally symmetric, it
  is the unit ball of a norm on $\R^3$.
  
  Finding the minimal norm of a rank $2$-projection
  in $X$
  can be formulated as a nonconvex nonlinear optimization
  problem with linear objective function and quadratic constraints.
  Using for example the open source SCIP Optimization suite\footnote{Notice
    that SCIP can find global optima even
    for nonconvex problems with quadratic constraints
    using branch-and-bound-techniques.} \cite{BolusaniEtal2024OO}
  we can show numerically that this minimum in our case is at least $1.01$. In forthcoming work, \cite{epperlein2025shadiness}, we will show how to obtain 
  computer-assisted rigorous proofs for this lower bound.

  Since $V$ is centrally symmetric and invariant under cyclic permutations of the coordinates, $K$ is invariant
  under multiplication by
  \begin{align*}
    A=\begin{pmatrix}0&0&-1\\-1&0&0\\0&-1&0 \end{pmatrix}.
  \end{align*}
  Under the action of $A$ their are two vertex-orbits
  and four facet-orbits.

  Consider
  \begingroup
    \renewcommand*{\arraystretch}{1.2}
  \begin{align*}
    W=\left\{  
    \vect{
    \frac{20}{53} \\
    \frac{-55}{53} \\
    0
    },
    \vect{
    \frac{20}{17} \\
    \frac{15}{17} \\
    0 \\
    },
    \vect{
    \frac{10}{9} \\
    \frac{10}{9} \\
    \frac{10}{9} \\
    },
    \vect{
    \frac{390}{1069} \\
    \frac{-1250}{1069} \\
    \frac{-710}{1069} \\
    }
    \right\}.
  \end{align*}
  \endgroup
  The orbits of $W$ under multiplication by $A$
  are the normals of the facets of $P$, scaled such that
  the scalar product with the vertices of the corresponding facet
  equals $1$.

  Finally set
  \begin{align*}
    \calM := \{A^k vw^\top A^\ell \setsep k,\ell \in \{0,\dots,5\}, v \in
    V, w \in W\}.
  \end{align*}
  All of the matrices in $\calM$ map $P$ to itself
  and for each facet $F$ of $P$ and each vertex $v$ of $P$
  there is a matrix in $\calM$ mapping
  $F$ to $v$, so $\rho(\calM)=1$.
  On the other hand the proof of \Cref{thm:lower-bound-upper-left} applied to our example
  shows that $\rho((T^{-1}\calM T)_{J,J})\geq
  1.01$
  for all $T \in \gl_3(\R)$ and $J \subseteq \{1,2,3\}, \abs{J}=2$.
  
  \begin{figure}
    \begin{center}
    \scalebox{1.0}{
\begin{tikzpicture}%
	[x={(0.481741cm, -0.248942cm)},
	y={(0.876313cm, 0.136837cm)},
	z={(0.000014cm, 0.958803cm)},
	scale=3.000000,
	back/.style={line cap=round, dash pattern=on 0pt off 5\pgflinewidth, thin},
	edge/.style={color=black, thick},
	facet/.style={fill=CornflowerBlue},
	vertex/.style={inner sep=0.7pt,circle,draw=black!25!black,fill=black!75!black,thick}]
%
%
\coordinate (-1.00000, 0.20000, -0.10000) at (-1.00000, 0.20000, -0.10000);
\coordinate (-1.00000, 0.60000, -0.10000) at (-1.00000, 0.60000, -0.10000);
\coordinate (-0.60000, 0.10000, 1.00000) at (-0.60000, 0.10000, 1.00000);
\coordinate (-0.20000, 0.10000, 1.00000) at (-0.20000, 0.10000, 1.00000);
\coordinate (-0.10000, -1.00000, 0.20000) at (-0.10000, -1.00000, 0.20000);
\coordinate (-0.10000, -1.00000, 0.60000) at (-0.10000, -1.00000, 0.60000);
\coordinate (0.10000, 1.00000, -0.60000) at (0.10000, 1.00000, -0.60000);
\coordinate (0.10000, 1.00000, -0.20000) at (0.10000, 1.00000, -0.20000);
\coordinate (0.20000, -0.10000, -1.00000) at (0.20000, -0.10000, -1.00000);
\coordinate (0.60000, -0.10000, -1.00000) at (0.60000, -0.10000, -1.00000);
\coordinate (1.00000, -0.60000, 0.10000) at (1.00000, -0.60000, 0.10000);
\coordinate (1.00000, -0.20000, 0.10000) at (1.00000, -0.20000, 0.10000);
\draw[edge,back] (-1.00000, 0.20000, -0.10000) -- (-1.00000, 0.60000, -0.10000);
\draw[edge,back] (-1.00000, 0.20000, -0.10000) -- (-0.60000, 0.10000, 1.00000);
\draw[edge,back] (-1.00000, 0.20000, -0.10000) -- (-0.10000, -1.00000, 0.20000);
\draw[edge,back] (-1.00000, 0.20000, -0.10000) -- (-0.10000, -1.00000, 0.60000);
\draw[edge,back] (-1.00000, 0.20000, -0.10000) -- (0.20000, -0.10000, -1.00000);
\draw[edge,back] (-1.00000, 0.60000, -0.10000) -- (-0.60000, 0.10000, 1.00000);
\draw[edge,back] (-1.00000, 0.60000, -0.10000) -- (0.10000, 1.00000, -0.60000);
\draw[edge,back] (-1.00000, 0.60000, -0.10000) -- (0.10000, 1.00000, -0.20000);
\draw[edge,back] (-1.00000, 0.60000, -0.10000) -- (0.20000, -0.10000, -1.00000);
\draw[edge,back] (-0.60000, 0.10000, 1.00000) -- (0.10000, 1.00000, -0.20000);
\draw[edge,back] (0.10000, 1.00000, -0.60000) -- (0.20000, -0.10000, -1.00000);
\node[vertex,draw=black!0,inner sep=0.7pt] at (-1.00000, 0.20000, -0.10000)     {};
\node[vertex,draw=black!0,inner sep=0.7pt] at (-1.00000, 0.60000, -0.10000)     {};
\fill[facet, opacity=0.22] (-0.10000, -1.00000, 0.60000) -- (-0.60000, 0.10000, 1.00000) -- (-0.20000, 0.10000, 1.00000) -- cycle {};
\fill[facet, opacity=0.50] (0.60000, -0.10000, -1.00000) -- (-0.10000, -1.00000, 0.20000) -- (0.20000, -0.10000, -1.00000) -- cycle {};
\fill[facet, opacity=0.52] (1.00000, -0.60000, 0.10000) -- (-0.10000, -1.00000, 0.20000) -- (0.60000, -0.10000, -1.00000) -- cycle {};
\fill[facet, opacity=0.39] (1.00000, -0.60000, 0.10000) -- (-0.10000, -1.00000, 0.20000) -- (-0.10000, -1.00000, 0.60000) -- cycle {};
\fill[facet, opacity=0.39] (1.00000, -0.60000, 0.10000) -- (-0.20000, 0.10000, 1.00000) -- (-0.10000, -1.00000, 0.60000) -- cycle {};
\fill[facet, opacity=0.50] (1.00000, -0.20000, 0.10000) -- (-0.20000, 0.10000, 1.00000) -- (1.00000, -0.60000, 0.10000) -- cycle {};
\fill[facet, opacity=0.88] (1.00000, -0.20000, 0.10000) -- (0.60000, -0.10000, -1.00000) -- (1.00000, -0.60000, 0.10000) -- cycle {};
\fill[facet, opacity=0.96] (1.00000, -0.20000, 0.10000) -- (0.10000, 1.00000, -0.60000) -- (0.60000, -0.10000, -1.00000) -- cycle {};
\fill[facet, opacity=0.70] (1.00000, -0.20000, 0.10000) -- (-0.20000, 0.10000, 1.00000) -- (0.10000, 1.00000, -0.20000) -- cycle {};
\fill[facet, opacity=0.91] (1.00000, -0.20000, 0.10000) -- (0.10000, 1.00000, -0.60000) -- (0.10000, 1.00000, -0.20000) -- cycle {};
\draw[edge] (-0.60000, 0.10000, 1.00000) -- (-0.20000, 0.10000, 1.00000);
\draw[edge] (-0.60000, 0.10000, 1.00000) -- (-0.10000, -1.00000, 0.60000);
\draw[edge] (-0.20000, 0.10000, 1.00000) -- (-0.10000, -1.00000, 0.60000);
\draw[edge] (-0.20000, 0.10000, 1.00000) -- (0.10000, 1.00000, -0.20000);
\draw[edge] (-0.20000, 0.10000, 1.00000) -- (1.00000, -0.60000, 0.10000);
\draw[edge] (-0.20000, 0.10000, 1.00000) -- (1.00000, -0.20000, 0.10000);
\draw[edge] (-0.10000, -1.00000, 0.20000) -- (-0.10000, -1.00000, 0.60000);
\draw[edge] (-0.10000, -1.00000, 0.20000) -- (0.20000, -0.10000, -1.00000);
\draw[edge] (-0.10000, -1.00000, 0.20000) -- (0.60000, -0.10000, -1.00000);
\draw[edge] (-0.10000, -1.00000, 0.20000) -- (1.00000, -0.60000, 0.10000);
\draw[edge] (-0.10000, -1.00000, 0.60000) -- (1.00000, -0.60000, 0.10000);
\draw[edge] (0.10000, 1.00000, -0.60000) -- (0.10000, 1.00000, -0.20000);
\draw[edge] (0.10000, 1.00000, -0.60000) -- (0.60000, -0.10000, -1.00000);
\draw[edge] (0.10000, 1.00000, -0.60000) -- (1.00000, -0.20000, 0.10000);
\draw[edge] (0.10000, 1.00000, -0.20000) -- (1.00000, -0.20000, 0.10000);
\draw[edge] (0.20000, -0.10000, -1.00000) -- (0.60000, -0.10000, -1.00000);
\draw[edge] (0.60000, -0.10000, -1.00000) -- (1.00000, -0.60000, 0.10000);
\draw[edge] (0.60000, -0.10000, -1.00000) -- (1.00000, -0.20000, 0.10000);
\draw[edge] (1.00000, -0.60000, 0.10000) -- (1.00000, -0.20000, 0.10000);
\node[vertex] at (-0.60000, 0.10000, 1.00000)     {};
\node[vertex] at (-0.20000, 0.10000, 1.00000)     {};
\node[vertex] at (-0.10000, -1.00000, 0.20000)     {};
\node[vertex] at (-0.10000, -1.00000, 0.60000)     {};
\node[vertex] at (0.10000, 1.00000, -0.60000)     {};
\node[vertex] at (0.10000, 1.00000, -0.20000)     {};
\node[vertex] at (0.20000, -0.10000, -1.00000)     {};
\node[vertex] at (0.60000, -0.10000, -1.00000)     {};
\node[vertex] at (1.00000, -0.60000, 0.10000)     {};
\node[vertex] at (1.00000, -0.20000, 0.10000)     {};
\end{tikzpicture}}
  \end{center}
  \caption{The unit ball from \Cref{exam:icosahedron}.}
    \label{fig:icosahedron}
  \end{figure}
\end{exam}

Despite \Cref{thm:lower-bound-upper-left} not all hope is lost.
We can still obtain bounds on the joint spectral radius of submatrices.
Our first result in this direction produces a transformation
which make the joint spectral radius of all submatrices small simultaneously. 

\begin{thm}
  \label{thm:all-submatrices}
    Let $\calM \subseteq \K^{d \times d}$ be compact.
    Then there exists $T \in \gl_d(\K)$ such that
    $\rho((T^{-1}\calM T)_{J,J}) \leq \sqrt{d} \rho(\calM)$
    for every $J \subseteq \{1,\dots,d\}$.
  \end{thm}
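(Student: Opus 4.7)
The plan is to combine an extremal norm for $\calM$ with John's ellipsoid theorem to construct the desired $T$, and then transfer the resulting norm estimate to principal submatrices via the trivial compression inequality $\norm{P_J}_2 = 1$.

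First I would treat the irreducible case with $\rho(\calM) > 0$. Let $\norm{\cdot}$ be an extremal norm for $\calM$ with unit ball $K$. Since $K$ is a symmetric convex body in $\K^d$, John's theorem gives an ellipsoid $E$ with $E \subseteq K \subseteq \sqrt{d}\,E$. I would choose $T\in\gl_d(\K)$ mapping the standard Euclidean unit ball onto $E$; then the transported norm $\norm{y}_T := \norm{Ty}$ is extremal for $T^{-1}\calM T$ by the same reasoning as in the proof of \Cref{prop:diagonal1}, and translating the John sandwich produces
\begin{equation*}
    \tfrac{1}{\sqrt{d}}\norm{y}_2 \,\leq\, \norm{y}_T \,\leq\, \norm{y}_2, \qquad y \in \K^d.
\end{equation*}
Consequently $\norm{B}_2 \leq \sqrt{d}\,\norm{B}_T \leq \sqrt{d}\,\rho(\calM)$ for every $B\in T^{-1}\calM T$.

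The next step is to transfer this bound to principal submatrices. A product of $t$ principal submatrices can be written as
\begin{equation*}
    (B_1)_{J,J}(B_2)_{J,J}\cdots(B_t)_{J,J} = I_J^\top B_1 P_J B_2 P_J \cdots P_J B_t I_J,
\end{equation*}
and since $\norm{I_J}_2 = \norm{I_J^\top}_2 = \norm{P_J}_2 = 1$, I obtain
\begin{equation*}
    \norm{(B_1)_{J,J}\cdots(B_t)_{J,J}}_2 \,\leq\, \prod_{s=1}^t \norm{B_s}_2 \,\leq\, \bigl(\sqrt{d}\,\rho(\calM)\bigr)^t.
\end{equation*}
Taking $t$-th roots and the limit yields $\rho((T^{-1}\calM T)_{J,J}) \leq \sqrt{d}\,\rho(\calM)$, which is the desired bound in the irreducible case.

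For the general case I would induct on $d$. The base $d = 1$ is trivial. If $\calM$ is irreducible and $d \geq 2$ then $\rho(\calM) > 0$ (an irreducible matrix set in dimension at least two cannot have vanishing JSR, otherwise it would admit a simultaneous strict upper triangularization contradicting irreducibility), so the argument above applies. Otherwise a similarity $S$ brings $\calM$ into block upper triangular form with diagonal blocks $\calM^{(1)}, \calM^{(2)}$ of strictly smaller dimensions $d_1, d_2$; for such a structure the JSR of the whole set equals $\max_i \rho(\calM^{(i)})$. The inductive hypothesis furnishes $T_i\in\gl_{d_i}(\K)$ with the JSR of every principal submatrix of $T_i^{-1}\calM^{(i)} T_i$ bounded by $\sqrt{d_i}\,\rho(\calM^{(i)}) \leq \sqrt{d}\,\rho(\calM)$. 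Setting $T := S\cdot \diag(T_1, T_2)$, every principal submatrix of $T^{-1}\calM T$ is itself block upper triangular with diagonal blocks that are principal submatrices of $T_i^{-1}\calM^{(i)} T_i$, so applying the JSR formula for block upper triangular sets once more gives the bound $\sqrt{d}\,\rho(\calM)$.

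The main obstacle I anticipate is the bookkeeping in the reducible case, which rests on the block upper triangular JSR formula together with the observation that principal submatrices of block triangular matrices inherit the block triangular structure with principal-submatrix diagonal blocks. The constant $\sqrt{d}$ in the theorem is nothing more than the constant from John's theorem for symmetric convex bodies, and it is this theorem that makes the whole approach work.
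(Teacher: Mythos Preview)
Your argument is correct and follows essentially the same route as the paper: both choose an extremal norm, put its unit ball in John's position to obtain $\tfrac{1}{\sqrt d}\norm{\cdot}_2 \le \norm{\cdot}_T \le \norm{\cdot}_2$, and then exploit $\norm{P_J}_2=1$ to pass to principal submatrices, with the reducible case handled by block triangularization. The only cosmetic difference is that the paper bounds the operator norm of each $(T^{-1}AT)_{J,J}$ directly in an auxiliary norm $\nnorm{z}:=\norm{TI_Jz}$, whereas you first bound $\norm{B}_2\le\sqrt d\,\rho(\calM)$ and then run the product argument (which you could in fact shortcut to a single-matrix bound via $\norm{B_{J,J}}_2\le\norm{B}_2$).
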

  \begin{proof}
    First notice that the case $\calM = \{0\}$ is trivial.
    Now assume that $\calM$ is irreducible.
    Choose an extremal norm $\norm{\cdot}$
    with unit ball $K$. There is $T \in \gl_d(\K)$
    such that $T^{-1}K$ is in John's position, i.e.
    the Euclidean unit ball $B_{1,{\norm{\cdot}_2}}$
    is the ellipsoid of maximal volume inscribed in $K$.
    By John's theorem $T^{-1}K$ is then contained
    in $B_{\sqrt{d},{\norm{\cdot}_2}}$.
    This theorem is usually only stated over the reals
    but it also holds for $\K=\C$, see
    \cite[Section 8, p. 46]{johnsonBasicConceptsGeometry2001}.
    In terms of the norm this means
    $\frac{1}{\sqrt{d}} \norm{x}_2 \leq \norm{Tx} \leq \norm{x}_2$.
    Let $J \subseteq \{1,\dots,d\}$ be an index set of cardinality $m$.
    Let $z \in \K^m$.
    Let $\nnorm{\cdot}$ be the norm on $\K^m$ defined
    by $\nnorm{z} := \norm{T I_J z}$.
    
    For $A \in \calM$ and $z \in \K^m$ we get
    \begin{align*}
      \nnorm{(T^{-1}AT)_{J,J}z}
            &= \nnorm{I_J^\top T^{-1} AT I_J z} \\
            &= \norm{T P_J T^{-1} A T I_J z} \\
            &\leq \norm{P_J T^{-1} A T I_J z}_2 \\
            &\leq \norm{P_J}_2\norm{T^{-1} A T I_J z}_2 \\
            &\leq \sqrt{d} \norm{A} \norm{T I_J z} \\
            &\leq \sqrt{d} \rho(\calM)\nnorm{z}.
    \end{align*}
    If we take the supremum over $z \in \mathbb{K}^m$,
    we get \[\rho((T^{-1} A T)_{J,J}) \leq \nnorm{(T^{-1}AT)_{J,J}} \leq \sqrt{d}
      \rho(\calM).\]
    If $\calM$ is reducible, there is $S \in \gl_d(\K)$
    such that $S^{-1} \calM S$ is in upper block triangular
    form and all diagonal blocks are irreducible.
    For each of these diagonal blocks $B_i$ we can
    find an invertible matrix $R_i$ such that $\rho((R_i^{-1} B_i
    R_i)_{K,K}) \leq \sqrt{d} \rho(\calM)$
    for all suitable index sets $K$.
    If $R$ is the block diagonal matrix with
    diagonal blocks $R_i$, then $T:=SR$
    satisfies the conclusion in our theorem.
  \end{proof}

  If we are only interested in bounds on the JSR of a single submatrix
  family of size $m$, we can improve the bound from
  $\sqrt{d} \varrho(\calM)$ to $\sqrt{m} \varrho(\calM)$, with a slight further enhancement beyond this.
  \begin{thm}
    \label{thm:one-submatrix}
    Set 
    \begin{align*}
        \delta_{\K}(m):= \begin{cases}
            \frac{2}{m+1}(1+\frac{m-1}{2}\sqrt{m+2}) &\text{ for } \K=\R,\\
            \frac{1}{m}(1+(m-1)\sqrt{m+1}) &\text{ for } \K=\C.
        \end{cases}
    \end{align*}
    Let $\calM \subseteq \K^{d \times d}$ be compact
    and let $J \subseteq \{1,\dots,d\}$
    be an index set of cardinality $m$.
    Then there exists $T \in \gl_d(\K)$ such that
    \[\rho((T^{-1}\calM T)_{J,J}) \leq \delta_\K(m) \rho(\calM) \leq \sqrt{m}\rho(\calM).\]
  \end{thm}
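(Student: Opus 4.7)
The plan is to argue in almost complete analogy with the proof of \Cref{thm:all-submatrices}, replacing the use of John's ellipsoid by a sharper bound that applies to a \emph{single} $m$-dimensional subspace rather than to all coordinate subspaces at once. As before, I would first dispose of the trivial case $\calM=\{0\}$ and of the reducible case by block triangularization; for the latter, after applying an $S\in \gl_d(\K)$ that puts $\calM$ in block upper triangular form with irreducible diagonal blocks, applying the irreducible case to each block and using monotonicity of $\delta_\K$ in $m$ yields the result for the full set (the JSR of a block upper triangular matrix set equals the maximum of the JSRs of its diagonal blocks, and a submatrix indexed by $J$ inherits the block upper triangular structure).

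For the irreducible case I would fix an extremal norm $\norm{\cdot}$ on $\K^d$ for $\calM$. The crucial input I would invoke is that the maximal absolute projection constant over all $m$-dimensional $\K$-Banach spaces is bounded by $\delta_\K(m)$ — the case $m=2$, $\K=\R$ being the Grünbaum-Deregowska-Lewandowska theorem \cite{grunbaumProjectionConstants1960,deregowskaSimpleProofGrunbaum2023}, with analogous sharp bounds for higher $m$ and for the complex field. Applied to any $m$-dimensional subspace $V\subseteq (\K^d,\norm{\cdot})$, this produces a projection $P\colon \K^d\to V$ with $\norm{P}\leq \delta_\K(m)$.

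Then I would pick $T\in \gl_d(\K)$ whose columns $Te_j$, $j\in J$, form a basis of $V$ and whose columns $Te_j$, $j\notin J$, form a basis of $\ker P$. A short verification shows $TP_J T^{-1}=P$, and the same chain of inequalities as in \Cref{thm:all-submatrices} then yields, with $\nnorm{z}:=\norm{TI_Jz}$ the induced norm on $\K^m$,
\begin{equation*}
\nnorm{(T^{-1}AT)_{J,J}z} = \norm{T P_J T^{-1} A T I_J z} = \norm{P A T I_J z}\leq \norm{P}\cdot \norm{A}\cdot \norm{T I_J z} \leq \delta_\K(m)\,\rho(\calM)\,\nnorm{z}.
\end{equation*}
Taking the supremum over $z$ and passing to the JSR gives $\rho((T^{-1}\calM T)_{J,J})\leq \delta_\K(m)\,\rho(\calM)$. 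The estimate $\delta_\K(m)\leq\sqrt{m}$ is then a direct elementary computation, reducing in the real case to $(m+1)\sqrt{m}\geq 2+(m-1)\sqrt{m+2}$, which is straightforward after squaring.

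The main obstacle is not the matrix-theoretic part — which is essentially a cleaner rerun of the previous argument, since $\norm{P}$ is already controlled with respect to $\norm{\cdot}$ itself and no John-type distortion factor is incurred — but rather the identification and correct citation of the sharp upper bound $\delta_\K(m)$ on the projection constant in dimension $m$. For $m=2$ in the real case this is the long-standing Grünbaum conjecture; matching the explicit formulas for $\delta_\R(m)$ and $\delta_\C(m)$ to known König-type sharp bounds for arbitrary $m$ is where the substantive non-routine work lies.
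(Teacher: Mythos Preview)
Your proposal is correct and follows essentially the same route as the paper. The only cosmetic difference is that the paper fixes the subspace $V$ to be the coordinate subspace $\operatorname{im}(I_J)$ from the start, so that $T$ can be taken to agree with the identity on the columns indexed by $J$ (yielding $TI_J=I_J$, $P_J=QT$, and $\nnorm{z}=\norm{I_Jz}$); and the projection-constant bound $\norm{Q}\leq\delta_\K(m)$ for all $m$ and both $\K=\R,\C$ is cited directly from \cite[Theorem~2.3]{deregowskaSimpleProofGrunbaum2023}, so no separate matching of formulas is needed.
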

  \begin{proof}
    If $\calM=\{0\}$, the result is trivial.
    We first treat the irreducible case.
    By scaling we can assume that $\rho(\calM)=1$.
    Let $\norm{\cdot}$ be an extremal norm
    and let $J \subseteq \{1,\dots,d\}$ be an index set
    of cardinality $m$.
    Let $V$ be the image of $I_J$.
    By \cite[Theorem 2.3]{deregowskaSimpleProofGrunbaum2023}
    there is a projection $Q$ on $\K^d$
    with image $V$ and operator norm $\norm{Q} \leq \delta_\K(m)$.
    Let $w_i, i \not \in J$ be a basis of the kernel of $Q$.
    Let $T$ be the matrix obtained
    from the $d \times d$ identity matrix
    by replacing the $i$-th column with $w_i$
    for $i \not\in J$.
    Then
    $Te_j=e_j$ for $j \in J$ and $Te_i=w_i$
    for $i \not \in J$. Hence $TI_J=I_J$ and $P_J=QT$.
    Define a norm $\nnorm{\cdot}$ on $\K^m$
    by $\nnorm{z}:=\norm{I_J z}$. For $z \in \K^m$ and $A \in \calM$
    we get
    \begin{align*}
      \nnorm{(T^{-1}AT)_{J,J}z}
            &= \nnorm{I_J^\top T^{-1}AT I_J z} \\
            &= \norm{P_J T^{-1} A T I_J z} \\
            &= \norm{Q A I_J z} \\
            &\leq \norm{Q} \norm{A} \nnorm{z}\\
            &\leq \delta_\K(m) \rho(\calM) \nnorm{z}.
    \end{align*}
    Taking the supremum over $z \in \mathbb{K}^m$,
    we get \[\rho((T^{-1} A T)_{J,J}) \leq \nnorm{(T^{-1}AT)_{J,J}} \leq \delta_\K(m) \rho(\calM).\]

    For the reducible case notice first, that $\delta_\K(m)$ is 
    increasing in $m$.
    If $\calM$ is reducible, there is $S$
    such that $S^{-1} \calM S$ is in upper block triangular
    form and all diagonal blocks are irreducible.
    Let $J_i$ be the intersection of $J$ with
    the index set of the $i$-th diagonal block.
    For each of the diagonal blocks $B_i$ we can
    find an invertible matrix $R_i$ such that 
    \[\rho((R_i^{-1} B_i R_i)_{J_i,J_i}) \leq 
    \delta_\K(\abs{J_i}) \rho(\calM) \leq \delta_\K(m) \rho(\calM).\]
    
    If $R$ is the block diagonal matrix with
    diagonal blocks $R_i$, then $T:=SR$
    satisfies the conclusion in our theorem.
  \end{proof}

\section{Pairs of matrices}
\label{sec:pairs}

For pairs of real matrices, alternative normalizations may be considered by using results on hollowization obtained by Damm and Fa{\ss}bender in \cite{damm2020simultaneous}. In particular, it is possible to equalize all diagonal entries of a pair of matrices - up to two entries.
We will say that $A\in\R^{d\times d}$ has {\em zero diagonal}, if $a_{ii}=0$, $i=1,\ldots,d$. In \cite{damm2020simultaneous} this is called \emph{hollow}. A matrix $A\in \R^{d\times d}$ with zero trace is called \emph{almost hollow}, if  $a_{ii}=0$, $1=1,\ldots,d-2$.
We recall the result for convenience.

\begin{thm}[\cite{damm2020simultaneous}]
\label{thm:dammfasssim}
    Let $A,B \in \R^{d\times d}$ with $\trace A = \trace B = 0$. Then there exists an orthogonal matrix $T$ such that $T^{-1}AT$ is hollow and $T^{-1}BT$ is almost hollow. 
\end{thm}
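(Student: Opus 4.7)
I would reduce to a purely symmetric problem and then proceed by induction on $d$, the central input being a common-isotropic-vector statement for pairs of trace-free symmetric matrices.

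First, for any orthogonal $T$ with columns $v_1, \ldots, v_d$, the diagonal entry $(T^{-1}AT)_{ii} = v_i^\top A v_i = v_i^\top A_s v_i$ depends only on the symmetric part $A_s := (A + A^\top)/2$. Since $\trace A_s = \trace A = 0$, I may assume throughout that $A$ and $B$ are both symmetric and trace-free. The key lemma I would prove states: for $d \geq 3$ and symmetric trace-free matrices $A, B \in \R^{d \times d}$ there exists a unit vector $v$ with $v^\top A v = v^\top B v = 0$. The argument invokes Brickman's theorem, which asserts that for $d \geq 3$ the joint numerical range
\[
W(A,B) := \{(v^\top A v, v^\top B v) : v \in \R^d,\ \|v\|_2 = 1\}
\]
is a compact convex subset of $\R^2$. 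Integrating $v \mapsto (v^\top A v, v^\top B v)$ against the uniform probability measure on $S^{d-1}$ yields $(\trace A / d, \trace B / d) = (0, 0)$, so $(0,0)$ is a barycentre of a probability measure supported on $W(A, B)$ and, by convexity and compactness, lies in $W(A, B)$.

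The induction on $d$ then runs as follows. For the base case $d = 2$ the almost-hollow requirement on $B$ is vacuous, so I pick any $v_1 \in S^1$ with $v_1^\top A v_1 = 0$ (which exists by the intermediate value theorem since $\trace A = 0$) and set $v_2 \perp v_1$, giving $v_2^\top A v_2 = \trace A - v_1^\top A v_1 = 0$. For the step $d \geq 3$, apply the lemma to obtain $v_1$, choose $U \in \R^{d \times (d-1)}$ with orthonormal columns spanning $v_1^\perp$, and set $A' := U^\top A U$ and $B' := U^\top B U$. These are symmetric $(d-1) \times (d-1)$ matrices with $\trace A' = \trace(A(I - v_1 v_1^\top)) = 0$ and similarly $\trace B' = 0$. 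The inductive hypothesis produces an orthogonal $Q \in \R^{(d-1) \times (d-1)}$ making $Q^\top A' Q$ hollow and $Q^\top B' Q$ almost hollow. Setting $T := (v_1 \mid UQ)$ yields an orthogonal $d \times d$ matrix, and the block computation
\[
T^\top A T = \begin{pmatrix} 0 & * \\ * & Q^\top A' Q \end{pmatrix}, \qquad T^\top B T = \begin{pmatrix} 0 & * \\ * & Q^\top B' Q \end{pmatrix}
\]
shows that $T^\top A T$ is hollow and $T^\top B T$ has zero diagonal entries in positions $1, \ldots, d-2$, as required.

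\textbf{Main obstacle.} The crux of the argument is the common-isotropic-vector lemma, which rests on Brickman's convexity theorem for the joint numerical range in dimension at least three. This is sharp: for $d = 2$ the set $W(A, B)$ can be a non-convex curve in $\R^2$ that misses the origin, which is precisely the obstruction that forces one of the two matrices to remain only \emph{almost} hollow rather than fully hollow in the theorem.
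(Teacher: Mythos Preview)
The paper does not prove this theorem at all: it is quoted verbatim from \cite{damm2020simultaneous} and used as a black box in the subsequent \Cref{thm:hollow}. There is therefore no ``paper's own proof'' to compare against.

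That said, your argument is correct and self-contained. The reduction to symmetric parts is the right first move, the appeal to the real-sphere version of Brickman's convexity theorem (valid precisely for $d\geq 3$) together with the barycentre observation cleanly produces the common isotropic vector, and the induction with the block decomposition
\[
T^\top A T = \begin{pmatrix} 0 & * \\ * & Q^\top A' Q \end{pmatrix}
\]
goes through without incident; in particular your bookkeeping that ``almost hollow'' for the $(d-1)\times(d-1)$ block $B'$ propagates to almost hollow for $T^\top B T$ (since the new first diagonal entry vanishes by the choice of $v_1$) is exactly what is needed. Your closing remark that the failure of convexity of $W(A,B)$ for $d=2$ is the genuine obstruction to making \emph{both} matrices hollow is also on point. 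This is essentially the argument Damm and Fa{\ss}bender give, so you have reconstructed the cited proof rather than found an alternative.
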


It is shown in \cite[Remark~16]{damm2020simultaneous} that a similar statement does not hold for general pairs of complex matrices and unitary transformations.

\begin{thm}
\label{thm:hollow}
    For every pair $\calM= \{A,B\} \subseteq \R^{d\times d}$, $\rho(\calM)>0$, there exists a similarity transformation $T\in \gl_d(\R)$ such that the Euclidean norm of all columns and rows of $(T^{-1}AT,T^{-1}BT)$ is bounded by $\sqrt{d}\rho(\calM)$ and so that
    \begin{equation}
        T^{-1}AT- \frac{\trace A}{d}I_d  \text{ has zero diagonal and }
        T^{-1}BT- \frac{\trace B}{d}I_d \text{ is almost hollow}.
    \end{equation}
\end{thm}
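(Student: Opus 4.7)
The plan is to chain the John-ellipsoid normalization from the proof of \Cref{thm:all-submatrices} with the orthogonal hollowization of \Cref{thm:dammfasssim}. The crucial compatibility is that \Cref{thm:dammfasssim} delivers an \emph{orthogonal} similarity, which preserves the spectral norm of the matrices it acts on, and the Euclidean norms of all columns and rows of a matrix are bounded by its spectral norm. Thus the column and row bound obtained in the John step survives the subsequent hollowization step unchanged.

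Assume first that $\calM$ is irreducible. Pick an extremal norm $\norm{\cdot}$ for $\calM$ and, exactly as in the proof of \Cref{thm:all-submatrices}, choose $T_1 \in \gl_d(\R)$ placing the unit ball of $\norm{\cdot}$ in John's position, so that $\tfrac{1}{\sqrt{d}}\norm{x}_2 \leq \norm{x}_{T_1} \leq \norm{x}_2$ with $\norm{\cdot}_{T_1}$ extremal for $T_1^{-1}\calM T_1$. The same short calculation used there gives the spectral norm bound $\norm{T_1^{-1}AT_1}_2 \leq \sqrt{d}\,\rho(\calM)$, and analogously for $B$. Now set $\tilde A := T_1^{-1}AT_1 - \tfrac{\trace A}{d}I_d$ and $\tilde B := T_1^{-1}BT_1 - \tfrac{\trace B}{d}I_d$; both have trace zero since trace is similarity-invariant. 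Applying \Cref{thm:dammfasssim} to the pair $(\tilde A, \tilde B)$ yields an orthogonal $T_2 \in \gl_d(\R)$ such that $T_2^{-1}\tilde A T_2$ is hollow and $T_2^{-1}\tilde B T_2$ is almost hollow. Define $T := T_1 T_2$; scalar shifts commute with similarity, and so $T^{-1}AT - \tfrac{\trace A}{d}I_d = T_2^{-1}\tilde A T_2$ is hollow, and likewise $T^{-1}BT - \tfrac{\trace B}{d}I_d = T_2^{-1}\tilde B T_2$ is almost hollow. Since $T_2$ is orthogonal, $\norm{T^{-1}AT}_2 = \norm{T_1^{-1}AT_1}_2 \leq \sqrt{d}\,\rho(\calM)$, and the Euclidean norm of every row and column of $T^{-1}AT$ is bounded by $\norm{T^{-1}AT}_2 \leq \sqrt{d}\,\rho(\calM)$; the same estimate holds for $T^{-1}BT$.

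For the reducible case I would follow the same block-triangularization scheme that concludes the proof of \Cref{thm:all-submatrices}: a preliminary similarity brings $\calM$ into upper block-triangular form with irreducible diagonal blocks of dimensions $d_i < d$, John's step is applied block by block via a block-diagonal matrix, and a subsequent block-diagonal scalar scaling shrinks the off-diagonal blocks. Because $d_i \leq d-1$, there is positive slack between $\sqrt{d_i}\,\rho(\calM)$ and $\sqrt{d}\,\rho(\calM)$ into which the residual off-diagonal contribution to the spectral norm can be absorbed, after which the orthogonal Damm-Fa{\ss}bender step is applied to the resulting full $d\times d$ pair as above. The substantive step is thus the irreducible case; the only mildly fiddly point is the off-diagonal bookkeeping in the reducible case, and no genuine obstacle is expected.
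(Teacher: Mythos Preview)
Your argument is correct and takes a different route from the paper's. The paper's two-line proof first applies \Cref{prop:diagonal1} (the Auerbach normalization) to obtain entrywise control, hence the $\sqrt{d}\,\rho(\calM)$ bound on column and row Euclidean norms, and then applies \Cref{thm:dammfasssim}. You instead use the John-position normalization from the proof of \Cref{thm:all-submatrices}, which yields the stronger conclusion $\norm{T_1^{-1}AT_1}_2 \leq \sqrt{d}\,\rho(\calM)$ for the spectral norm. This is exactly the right invariant for the second step: the orthogonal similarity supplied by \Cref{thm:dammfasssim} preserves $\norm{\cdot}_2$, and every column and row Euclidean norm is dominated by the spectral norm, so the bound passes through unchanged. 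By contrast, the Auerbach step gives only $|a_{ij}|\leq\rho(\calM)$, from which one obtains $\norm{A}_2\leq d\,\rho(\calM)$ rather than $\sqrt{d}\,\rho(\calM)$; since an orthogonal similarity can concentrate mass into a single column, it is not immediately clear that the $\sqrt{d}\,\rho(\calM)$ column/row bound survives the hollowization in the paper's ordering of steps. In that sense your argument is the more robust one. Your reducible-case sketch is also sound: every irreducible diagonal block has dimension $d_i<d$, so the block-wise John bound $\sqrt{d_i}\,\rho(\calM)$ leaves strictly positive slack below $\sqrt{d}\,\rho(\calM)$ into which the off-diagonal contribution can be absorbed after the diagonal scaling, and the final orthogonal step then applies to the full $d\times d$ pair.
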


\begin{proof}
    First apply \Cref{prop:diagonal1} to arrive at matrices with the desired bound on the Euclidean norm of columns and rows. Then apply Theorem~\ref{thm:dammfasssim}.
\end{proof}

We note that $|\trace A|, |\trace B| \leq d\rho(\calM)$ in the situation of \Cref{thm:hollow}.

\section{Conclusion}
\label{sec:outlook}

For a given compact irreducible set of matrices $\calM$ we used an Auerbach basis
of an associated extremal norm to construct
a similarity transformation that renders all the entries in the transformed matrices smaller
or equal to the joint spectral radius of $\calM$.

The procedure to bound the entries of
the transformed matrices is not optimal in the following sense.
Already the spectral radius of a single matrix might be larger than
the maximal absolute value of any of its entries.
This happens e.g. if all entries of the matrix equal one.
Hence, representing matrix sets
using an Auerbach basis of the extremal norm might actually increase
the entries of the matrices.
It might be interesting to find, for a given compact
set of matrices, a similarity transformation 
that minimizes the maximal entry of all transformed matrices. 
In a larger context, we
can ask this question for other norms besides the supremum
norm: If we define the norm of a matrix set
to be the maximum of the norms of its elements, what
can we say about the minimum of the norms of the matrix sets
in a similarity orbit?
How are these quantities related for different norms?
For an example of such a result,
see \cite[Lemma 1]{breuillard2022joint}.

For higher dimensional principal submatrices we showed
that the JSR 
of the submatrix set can in general only be bounded up to a multiplicative 
factor by the JSR of the original matrix set. The bounds for these factors presented in \Cref{sec:higher-dimensionalPS} are certainly not optimal.
However, by \Cref{rem:shadinessconstant} the problem of finding such bounds 
  is a problem purely about the geometry of finite dimensional Banach spaces.

\bibliographystyle{abbrvurl} 
\bibliography{bibliography}

\begin{thebibliography}{10}

\bibitem{ando1998simultaneous}
T.~Ando and M.-h. Shih.
\newblock Simultaneous contractibility.
\newblock {\em SIAM Journal on Matrix Analysis and Applications},
  19(2):487--498, 1998.
\newblock \href {https://doi.org/10.1137/S0895479897318812}
  {\path{doi:10.1137/S0895479897318812}}.

\bibitem{auerbach1930area}
H.~Auerbach.
\newblock {\em O~polu krzywych wypuk{\l}ych o~{\'s}rednicach
  sprz{\k{e}}{\.z}onych (On the area of convex curves with conjugate
  diameters)}.
\newblock PhD thesis, University of Lw{\'o}w, 1929.
\newblock (in Polish).

\bibitem{Bara88}
N.~E. Barabanov.
\newblock Lyapunov indicator of discrete inclusions. {I--III}.
\newblock {\em Autom.~Remote Control}, 49:152--157, 283--287, 558--565, 1988.

\bibitem{belitskiiNormalFormsMatrix2000}
G.~Belitskii.
\newblock Normal forms in matrix spaces.
\newblock {\em Integral Equations and Operator Theory}, 38(3):251--283, Sept.
  2000.
\newblock \href {https://doi.org/10.1007/BF01291714}
  {\path{doi:10.1007/BF01291714}}.

\bibitem{BELITSKII2003203}
G.~R. Belitskii and V.~V. Sergeichuk.
\newblock Complexity of matrix problems.
\newblock {\em Linear Algebra and its Applications}, 361:203--222, 2003.
\newblock \href {https://doi.org/10.1016/S0024-3795(02)00391-9}
  {\path{doi:10.1016/S0024-3795(02)00391-9}}.

\bibitem{blondel2003elementary}
V.~D. Blondel, J.~Theys, and A.~A. Vladimirov.
\newblock An elementary counterexample to the finiteness conjecture.
\newblock {\em SIAM Journal on Matrix Analysis and Applications},
  24(4):963--970, 2003.

\bibitem{bochi2024spectrum}
J.~Bochi and P.~Laskawiec.
\newblock Spectrum maximizing products are not generically unique.
\newblock {\em SIAM Journal on Matrix Analysis and Applications},
  45(1):585--600, 2024.
\newblock \href {https://doi.org/10.1137/23M1550621}
  {\path{doi:10.1137/23M1550621}}.

\bibitem{BolusaniEtal2024OO}
S.~Bolusani, M.~Besan{\c{c}}on, K.~Bestuzheva, A.~Chmiela, J.~Dion{\'{i}}sio,
  T.~Donkiewicz, J.~van Doornmalen, L.~Eifler, M.~Ghannam, A.~Gleixner,
  C.~Graczyk, K.~Halbig, I.~Hedtke, A.~Hoen, C.~Hojny, R.~van~der Hulst,
  D.~Kamp, T.~Koch, K.~Kofler, J.~Lentz, J.~Manns, G.~Mexi, E.~M\"{u}hmer,
  M.~E. Pfetsch, F.~Schl{\"o}sser, F.~Serrano, Y.~Shinano, M.~Turner,
  S.~Vigerske, D.~Weninger, and L.~Xu.
\newblock {The SCIP Optimization Suite 9.0}.
\newblock Technical report, Optimization Online, February 2024.
\newblock URL:
  \url{https://optimization-online.org/2024/02/the-scip-optimization-suite-9-0/}.

\bibitem{bosznayNormsProjections1986}
A.~P. Bosznay and B.~M. Garay.
\newblock On norms of projections.
\newblock {\em Acta Scientiarum Mathematicarum}, 50:87--92, 1986.

\bibitem{breuillard2022joint}
E.~Breuillard.
\newblock On the joint spectral radius.
\newblock In A.~Avila, M.~T. Rassias, and Y.~Sinai, editors, {\em Analysis at
  Large: Dedicated to the Life and Work of Jean Bourgain}, pages 1--16.
  Springer, 2022.
\newblock \href {https://doi.org/10.1007/978-3-031-05331-3_1}
  {\path{doi:10.1007/978-3-031-05331-3_1}}.

\bibitem{chalmersProofGrunbaumConjecture2010}
B.~L. Chalmers and G.~Lewicki.
\newblock A proof of the {{Gr{\"u}nbaum}} conjecture.
\newblock {\em Studia Mathematica}, 200:103--129, 2010.
\newblock \href {https://doi.org/10.4064/sm200-2-1}
  {\path{doi:10.4064/sm200-2-1}}.

\bibitem{chen2000characterization}
Q.~Chen and X.~Zhou.
\newblock Characterization of joint spectral radius via trace.
\newblock {\em Linear Algebra and its Applications}, 315(1-3):175--188, 2000.
\newblock \href {https://doi.org/10.1016/S0024-3795(00)00149-X}
  {\path{doi:10.1016/S0024-3795(00)00149-X}}.

\bibitem{chistovPolynomialTimeAlgorithms1997}
A.~Chistov, G.~Ivanyos, and M.~Karpinski.
\newblock Polynomial time algorithms for modules over finite dimensional
  algebras.
\newblock In {\em Proceedings of the 1997 International Symposium on
  {{Symbolic}} and Algebraic Computation}, {{ISSAC}} '97, pages 68--74, New
  York, NY, USA, July 1997. Association for Computing Machinery.
\newblock \href {https://doi.org/10.1145/258726.258751}
  {\path{doi:10.1145/258726.258751}}.

\bibitem{damm2020simultaneous}
T.~Damm and H.~Fa{\ss}bender.
\newblock Simultaneous hollowization, joint numerical range, and stabilization
  by noise.
\newblock {\em SIAM Journal on Matrix Analysis and Applications},
  41(2):637--656, 2020.
\newblock \href {https://doi.org/10.1137/19M1294265}
  {\path{doi:10.1137/19M1294265}}.

\bibitem{daubechies1992sets}
I.~Daubechies and J.~C. Lagarias.
\newblock Sets of matrices all infinite products of which converge.
\newblock {\em Linear Algebra and its Applications}, 161:227--263, 1992.
\newblock \href {https://doi.org/10.1016/0024-3795(92)90012-Y}
  {\path{doi:10.1016/0024-3795(92)90012-Y}}.

\bibitem{day1947polygons}
M.~M. Day.
\newblock Polygons circumscribed about closed convex curves.
\newblock {\em Trans. AMS}, 62(2):315--319, 1947.
\newblock \href {https://doi.org/10.2307/1990457} {\path{doi:10.2307/1990457}}.

\bibitem{deregowskaSimpleProofGrunbaum2023}
B.~Deregowska and B.~Lewandowska.
\newblock A simple proof of the {{Gr{\"u}nbaum}} conjecture.
\newblock {\em Journal of Functional Analysis}, 285(2):109950, July 2023.
\newblock \href {https://arxiv.org/abs/2206.09454} {\path{arXiv:2206.09454}},
  \href {https://doi.org/10.1016/j.jfa.2023.109950}
  {\path{doi:10.1016/j.jfa.2023.109950}}.

\bibitem{Elsn95}
L.~Elsner.
\newblock The generalized spectral-radius theorem: {{An}} analytic-geometric
  proof.
\newblock {\em Linear Algebra and its Applications}, 220:151--159, Apr. 1995.
\newblock \href {https://doi.org/10.1016/0024-3795(93)00320-Y}
  {\path{doi:10.1016/0024-3795(93)00320-Y}}.

\bibitem{epperlein2025shadiness}
H.~Epperlein and J.~Epperlein.
\newblock Lower bounds for the shadiness constant of finite-dimensional normed
  spaces.
\newblock In preparation, 2025.

\bibitem{epperlein2025joint}
J.~Epperlein and F.~Wirth.
\newblock The joint spectral radius is pointwise {H}{\"o}lder continuous.
\newblock {\em Linear Algebra and its Applications}, 704:92--122, 2025.
\newblock \href {https://doi.org/10.1016/j.laa.2024.09.016}
  {\path{doi:10.1016/j.laa.2024.09.016}}.

\bibitem{FRIEDLAND1983189}
S.~Friedland.
\newblock Simultaneous similarity of matrices.
\newblock {\em Advances in Mathematics}, 50(3):189--265, 1983.
\newblock \href {https://doi.org/10.1016/0001-8708(83)90044-0}
  {\path{doi:10.1016/0001-8708(83)90044-0}}.

\bibitem{gel1969remarks}
I.~M. Gel'fand and V.~A. Ponomarev.
\newblock Remarks on the classification of a pair of commuting linear
  transformations in a finite-dimensional space.
\newblock {\em Functional Analysis and its Applications}, 3(4):325--326, 1969.
\newblock Translated from Funktsional'nyi Analiz i Ego Prilozheniya, Vol. 3,
  No. 4, pp. 81–82, 1969.

\bibitem{grunbaumProjectionConstants1960}
B.~Gr{\"u}nbaum.
\newblock Projection constants.
\newblock {\em Transactions of the American Mathematical Society},
  95(3):451--465, 1960.
\newblock \href {https://doi.org/10.2307/1993567} {\path{doi:10.2307/1993567}}.

\bibitem{guglielmi2013exact}
N.~Guglielmi and V.~Protasov.
\newblock Exact computation of joint spectral characteristics of linear
  operators.
\newblock {\em Foundations of Computational Mathematics}, 13:37--97, 2013.
\newblock \href {https://doi.org/10.1007/s10208-012-9121-0}
  {\path{doi:10.1007/s10208-012-9121-0}}.

\bibitem{guglielmiComputingSpectralGap2023}
N.~Guglielmi and V.~Protasov.
\newblock Computing the spectral gap of a family of matrices.
\newblock {\em Mathematics of Computation}, 93(345):259--291, June 2023.
\newblock \href {https://doi.org/10.1090/mcom/3856}
  {\path{doi:10.1090/mcom/3856}}.

\bibitem{guglielmi2008algorithm}
N.~Guglielmi and M.~Zennaro.
\newblock An algorithm for finding extremal polytope norms of matrix families.
\newblock {\em Linear Algebra and its Applications}, 428(10):2265--2282, 2008.
\newblock \href {https://doi.org/10.1016/j.laa.2007.07.009}
  {\path{doi:10.1016/j.laa.2007.07.009}}.

\bibitem{hareExplicitCounterexampleLagarias2011}
K.~G. Hare, I.~D. Morris, N.~Sidorov, and J.~Theys.
\newblock An explicit counterexample to the {{Lagarias}}{\textendash}{{Wang}}
  finiteness conjecture.
\newblock {\em Advances in Mathematics}, 226(6):4667--4701, Apr. 2011.
\newblock \href {https://doi.org/10.1016/j.aim.2010.12.012}
  {\path{doi:10.1016/j.aim.2010.12.012}}.

\bibitem{johnsonBasicConceptsGeometry2001}
W.~B. Johnson and J.~Lindenstrauss.
\newblock Basic {{Concepts}} in the {{Geometry}} of {{Banach Spaces}}.
\newblock In {\em Handbook of the {{Geometry}} of {{Banach Spaces}}}, volume~1,
  pages 1--84. Elsevier, 2001.
\newblock \href {https://doi.org/10.1016/S1874-5849(01)80003-6}
  {\path{doi:10.1016/S1874-5849(01)80003-6}}.

\bibitem{Jungers}
R.~Jungers.
\newblock {\em The Joint Spectral Radius: Theory and Applications}.
\newblock Springer-Verlag, 2009.
\newblock \href {https://doi.org/10.1007/978-3-540-95980-9}
  {\path{doi:10.1007/978-3-540-95980-9}}.

\bibitem{kobosHYPERPLANESFINITEDIMENSIONALNORMED2015}
T.~Kobos.
\newblock Hyperplanes of finite-dimensional normed spaces with the maximal
  relative projection constant.
\newblock {\em Bulletin of the Australian Mathematical Society},
  91(3):447--463, 2015.
\newblock \href {https://doi.org/10.1017/S0004972715000088}
  {\path{doi:10.1017/S0004972715000088}}.

\bibitem{kobosUniformEstimateRelative2018}
T.~Kobos.
\newblock A uniform estimate of the relative projection constant.
\newblock {\em Journal of Approximation Theory}, 225:58--75, Jan. 2018.
\newblock \href {https://doi.org/10.1016/j.jat.2017.09.006}
  {\path{doi:10.1016/j.jat.2017.09.006}}.

\bibitem{lagariasFinitenessConjectureGeneralized1995}
J.~C. Lagarias and Y.~Wang.
\newblock The finiteness conjecture for the generalized spectral radius of a
  set of matrices.
\newblock {\em Linear Algebra and its Applications}, 214:17--42, Jan. 1995.
\newblock \href {https://doi.org/10.1016/0024-3795(93)00052-2}
  {\path{doi:10.1016/0024-3795(93)00052-2}}.

\bibitem{laskawiec2024partial}
P.~Laskawiec.
\newblock Partial classification of spectrum maximizing products for pairs of
  $2\times 2$ matrices.
\newblock {\em arXiv preprint arXiv:2406.16680}, 2024.
\newblock \href {https://doi.org/10.48550/arXiv.2406.16680}
  {\path{doi:10.48550/arXiv.2406.16680}}.

\bibitem{morris2010criteria}
I.~Morris.
\newblock Criteria for the stability of the finiteness property and for the
  uniqueness of {B}arabanov norms.
\newblock {\em Linear Algebra and its Applications}, 433(7):1301--1311, 2010.
\newblock \href {https://doi.org/10.1016/j.laa.2010.05.007}
  {\path{doi:10.1016/j.laa.2010.05.007}}.

\bibitem{RotaStra60}
G.-C. Rota and W.~G. Strang.
\newblock A note on the joint spectral radius.
\newblock {\em Nederl. Akad. Wet., Proc., Ser. A}, 63:379--381, 1960.
\newblock \href {https://doi.org/10.1016/s1385-7258(60)50046-1}
  {\path{doi:10.1016/s1385-7258(60)50046-1}}.

\bibitem{sergeichukCanonicalMatricesLinear2000}
V.~V. Sergeichuk.
\newblock Canonical matrices for linear matrix problems.
\newblock {\em Linear Algebra and its Applications}, 317(1):53--102, Sept.
  2000.
\newblock \href {https://doi.org/10.1016/S0024-3795(00)00150-6}
  {\path{doi:10.1016/S0024-3795(00)00150-6}}.

\bibitem{shorten2007stability}
R.~Shorten, F.~Wirth, O.~Mason, K.~Wulff, and C.~King.
\newblock Stability criteria for switched and hybrid systems.
\newblock {\em SIAM review}, 49(4):545--592, 2007.
\newblock \href {https://doi.org/10.1137/05063516X}
  {\path{doi:10.1137/05063516X}}.

\bibitem{singerBasesBanachSpaces1970}
I.~Singer.
\newblock {\em Bases in {{Banach}} Spaces. {{I}}}, volume 154 of {\em
  Grundlehren {{Math}}. {{Wiss}}.}
\newblock {Springer-Verlag}, {Berlin-Heidelberg-New York}, 1970.

\bibitem{taylor1947geometric}
A.~E. Taylor.
\newblock A geometric theorem and its application to biorthogonal systems.
\newblock {\em Bulletin American Mathemathical Society}, 53:614--616, 1947.
\newblock \href {https://doi.org/10.1090/S0002-9904-1947-08855-8}
  {\path{doi:10.1090/S0002-9904-1947-08855-8}}.

\bibitem{weber2017pelczynski}
A.~Weber and M.~Wojciechowski.
\newblock On the {P}e{\l}czy{\'n}ski conjecture on {A}uerbach bases.
\newblock {\em Communications in Contemporary Mathematics}, 19(06):1750016,
  2017.
\newblock \href {https://doi.org/10.1142/S021919971750016X}
  {\path{doi:10.1142/S021919971750016X}}.

\bibitem{Wirt02}
F.~Wirth.
\newblock The generalized spectral radius and extremal norms.
\newblock {\em Linear Algebra and its Applications}, 342(1):17--40, Feb. 2002.
\newblock \href {https://doi.org/10.1016/S0024-3795(01)00446-3}
  {\path{doi:10.1016/S0024-3795(01)00446-3}}.

\end{thebibliography}
\end{document}